\newcommand{\comm}[1]{}
\newcommand{\eps}{\varepsilon}
\newcommand{\hide}[1]{}
\renewcommand{\marginpar}[1]{}
\def\C{{\mathbb C}}
\def\SS{{\mathcal S}}
\def\al{\alpha}
\newtheorem{theorem}{Theorem}
\newtheorem{lemma}[theorem]{Lemma}
\newtheorem{corollary}[theorem]{Corollary}
\theoremstyle{definition}
\newtheorem{definition}[theorem]{Definition}
\theoremstyle{remark}
\newtheorem{remark}{Remark}
\begin{document}

\title[On the speed of convergence of Newton's method]{On the speed
of convergence of Newton's method for complex polynomials}
\author{Todor Bilarev}
\address{
Institut f\"{u}r Mathematik, Humboldt-Universit\"{a}t zu Berlin, Unter den Linden 6, D-10099 Berlin, Germany}
\email{bilarev.todor@gmail.com}

\author{Magnus Aspenberg}
\address{Lund University,
Centre of Mathematical Sciences, Box 11, 221 00 Lund, Sweden
}
\email{maspenberg@gmail.com}

\author{Dierk Schleicher}
\address{Research I, Jacobs University Bremen, Postfach 750 561, D-28725
Bremen, Germany}
\email{dierk@jacobs-university.de}
\begin{abstract}
       We investigate Newton's method for complex polynomials of
arbitrary degree $d$, normalized so that all their roots are in the
unit disk. For each degree $d$, we give an explicit set
$\mathcal{S}_d$ of $3.33d\log^2 d(1 + o(1))$ points with the
following universal property: for every normalized polynomial of
degree $d$ there are $d$ starting points in $\mathcal{S}_d$ whose
Newton iterations find all the roots with a low number of iterations: if the roots are uniformly and
independently distributed, we show that with probability at least $1-2/d$ the number of iterations for
these $d$ starting points to reach all roots with precision
$\varepsilon$ is  $O(d^2\log^4 d + d\log|\log \varepsilon|)$. This is
an improvement of an earlier result in \cite{D-draft}, where the
number of iterations is shown to be
        $O(d^4\log^2 d + d^3\log^2d|\log \varepsilon|)$ in the worst case
(allowing multiple roots) and
        $O(d^3\log^2 d(\log d + \log \delta) + d\log|\log \varepsilon|)$
for well-separated (so-called $\delta$-separated) roots.

        Our result is almost optimal for this kind of starting points in
the sense that the number
of iterations can never be smaller than $O(d^2)$ for fixed $\eps$.
\end{abstract}
      \maketitle

\section{Introduction}

Newton's root finding method is an old and classical method for
finding roots of a differentiable function; it goes back to Newton in
the 18th century, perhaps earlier. It was one of the main reasons why
A.\ Douady, J.\ Hubbard and others in the late 1970s studied
iterations of complex analytic functions. The main question was to
know where to start the Newton iterative method in order to converge
to the roots of the function. Newton's method is known as rapidly
converging near the roots (usually with quadratic convergence), but
had a reputation that its global dynamics was difficult to
understand, so that in practice often other methods for root finding are
used. See \cite{JohannesNewtonSurvey,D-Fields} for an overview on recent
results about Newton's method. In the following work we will be concerned with the problem of finding the roots of complex polynomials.

Meanwhile, some small sets of good starting points are known: there
are explicit deterministic sets with $O(d\log^2d)$ points that are
guaranteed to find all roots of appropriately normalized complex polynomials
of degree $d$ \cite{HSS}, and probabilistic sets with as few as
$O(d(\log\log d)^2)$ points \cite{BLS}.

We are interested in the question how many iterations are required
until all roots are found with prescribed precision $\eps$. In
\cite{D-draft}, it is shown that among a set of starting points as
specified above, there are $d$ points that converge to the $d$ roots
and require at most $O(d^4\log^2d+d^3\log^2d|\log\eps|)$ iterations to get
$\eps$-close to the $d$ roots in the worst case; for randomly placed
roots (or for roots at mutual distance at least $\delta$ for some
$\delta>0$), the required number of iterations is no more than
$O(d^3\log^3d+d\log|\log\eps|)$ (with the constant depending on
$\delta$). This is about one power of $d$ away from the best possible
bounds.

In this paper, we show that Newton's method is about as fast as
theoretically possible. We consider the space of polynomials of
degree $d$, normalized so as to have all roots in the complex unit
disk $\mathbb D$. Our main result is the following.

\begin{theorem}[Quadratic Convergence in Expected Case] \label{Thm:Main}
For every degree $d$, there is an explicit universal set $\SS_d$ of
points in $\C$, with $|\SS_d|=3.33d\log^2d(1+o(1))$, with the
following property:
suppose that $\al_1, \ldots, \al_d$ are uniformly and independently
distributed in the unit disk and consider the polynomial $\prod_{j=1}^d (z-\al_j)$.
Then with probability at least $1-2/d$ there are $d$ starting points in $\SS_d$ such that the number of
iterations needed to approximate all $d$ roots with any given precision $\eps > 0$
starting at these $d$ points is at most 
\[ 
	C(d^2\log^4 d + d\log|\log \varepsilon|)
\]
for a universal constant $C$.
\end{theorem}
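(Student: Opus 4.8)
The plan is a two-phase analysis of the Newton iteration, with the root configuration controlled probabilistically. For the set, I would take $\SS_d$ to be a union of $\lceil c_1\log d\rceil$ concentric circles with radii between $1$ and a fixed $r_{\max}>1$, each carrying $\lceil c_2 d\log d\rceil$ equally spaced points; this is the Hubbard--Schleicher--Sutherland configuration of \cite{HSS} with the constants tuned so that $|\SS_d|=3.33\,d\log^2 d\,(1+o(1))$. From \cite{HSS} and the channel structure underlying \cite{D-draft}, for every normalized degree-$d$ polynomial $p$ and every simple root $\al_j$ the immediate basin $U_j$ of $\al_j$ under $N_p$ is unbounded and contains a \emph{channel} $\Gamma_j$ running from $\al_j$ out past $\{|z|=r_{\max}\}$, the crossing arc of which has length bounded below; hence some $z_j\in\SS_d$ lies on that arc. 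Taking $3.33\,d\log^2 d$ rather than $\approx 2.2\,d\log^2 d$ points gives a fine enough angular mesh to place $z_j$ well inside $\Gamma_j$, not merely in it.

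For the probabilistic input, if $\al_1,\dots,\al_d$ are i.i.d.\ uniform in $\mathbb D$ then routine first-moment and union bounds give that, with probability $\ge 1-2/d$, all of the following hold at once: (i) the separation $\delta:=\min_{i\neq j}|\al_i-\al_j|$ is at least $d^{-a}$; (ii) every root lies at distance $\ge d^{-b}$ from $\partial\mathbb D$; (iii) $|p'(\al_j)|\ge d^{-c}$ and $\big|\sum_{k\neq j}(\al_j-\al_k)^{-1}\big|\le d^{c'}$ for all $j$; (iv) a discrepancy bound: every disk of radius $\rho$ contains at most $O(d\rho^2\log d+\log d)$ roots; and (v) for every $j$, at most $O(\log d)$ of the other roots lie within distance $d^{-1/4}$, say, of the Newton-flow trajectory from $z_j$ to $\al_j$. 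Properties (i)--(iii) ensure the quadratic-convergence disks $Q_j:=\{|z-\al_j|<\rho_j\}$ with $\rho_j\asymp d^{-O(1)}$ are not too small, (iv) keeps the channels from being pinched, and (v) keeps the orbit from being routed past too many roots.

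For the iteration count, fix $j$ and follow $z_j,N_p(z_j),N_p^2(z_j),\dots$. On $\{|z|\ge r_{\max}\}$ one has $N_p(z)=\tfrac{d-1}{d}z+O(1)$, so $|z|$ contracts geometrically and $O(d)$ steps carry the orbit into $\{|z|<r_{\max}\}$ while keeping it in $\Gamma_j$. Thereafter I would work in the hyperbolic metric of the safe region $V_j:=\C\setminus\bigcup_{k\neq j}Q_k$, in which the orbit stays since $\Gamma_j\subset U_j\subset V_j$. Because $|p/p'|\ge d^{-1}\cdot\mathrm{dist}(z,\{\al_k\})$ and the nearest $\partial Q_k$ is at most that distance away, one step reduces the hyperbolic distance to $Q_j$ by $\Omega(1/d)$ (using that the orbit shadows the Newton flow toward $\al_j$, which is the content to be proved), so the number of steps to reach $Q_j$ is $O(d)\cdot\mathrm{dist}_{V_j}(z_j,Q_j)$; and the geometry of $V_j$ — the plane with $d-1$ disks of radius $d^{-O(1)}$ removed, none closer than $\delta$, not clustered, with only $O(\log d)$ of them near the orbit, by (i), (iv), (v) — gives $\mathrm{dist}_{V_j}(z_j,Q_j)=O(\log^{O(1)}d)$. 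Hence Phase 1 costs $O(d\log^4 d)$ steps for $\al_j$. Once inside $Q_j$, $N_p(z)-\al_j=\tfrac{p''(\al_j)}{2p'(\al_j)}(z-\al_j)^2(1+o(1))$ and (iii) give honest quadratic convergence, so $O(\log\log d+\log|\log\eps|)$ further steps reach precision $\eps$. Summing over the $d$ roots, Phase 1 contributes $O(d^2\log^4 d)$ and Phase 2 contributes $O(d\log\log d+d\log|\log\eps|)$, which is the claimed bound; note the outer-region count alone forces the cumulative total to be $\gtrsim d^2$, matching the stated near-optimality.

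The main obstacle is the channel step. One must upgrade the soft existence of $\Gamma_j$ from \cite{HSS} to quantitative statements: that the orbit, once placed in $\Gamma_j$ on $\{|z|=r_{\max}\}$, never leaves $\Gamma_j$ — in particular never enters a wrong $Q_k$ and never escapes back toward $\infty$ — which is delicate because $N_p$ has poles at the zeros of $p'$ that the channel must skirt; and the uniform (over all $j$ simultaneously) lower bound on channel widths, equivalently the upper bound on $\mathrm{dist}_{V_j}(z_j,Q_j)$, deduced from the root regularity (i), (iv), (v). The latter is an extremal-length / harmonic-measure estimate on a punctured plane; since the $d$ basins partition $\C$ and their channels compete for room, the uniformity in $j$ is the subtle point, and pushing the exceptional probability down to $O(1/d)$ is what fixes the powers of $\log d$.
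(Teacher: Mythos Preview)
Your overall architecture---a deterministic starting set, a probabilistic root-regularity event, a far-region phase, and a quadratic-convergence endgame---matches the paper's, but the engine of your far-region analysis is genuinely different and contains a real gap. You propose to track the orbit by hyperbolic distance in $V_j=\C\setminus\bigcup_{k\neq j}Q_k$, asserting that each Newton step reduces $\mathrm{dist}_{V_j}(\cdot,Q_j)$ by $\Omega(1/d)$ and that $\mathrm{dist}_{V_j}(z_j,Q_j)=O(\log^{O(1)}d)$. Neither is justified. The map $N_p$ is not a holomorphic self-map of $V_j$: it has poles at the zeros of $p'$, all of which lie in $V_j$, so there is no Schwarz-lemma contraction in that metric; and even granted your Euclidean displacement bound $|p/p'|\ge d^{-1}\mathrm{dist}(z,\{\alpha_k\})$, you need the step to point \emph{toward} $Q_j$ in the $V_j$-metric, which is exactly the ``shadowing'' statement you flag as ``the content to be proved''. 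Your hyperbolic distance bound you likewise label ``the main obstacle''. Finally, your condition (v)---at most $O(\log d)$ roots near the Newton-flow trajectory from $z_j$---is not a routine first-moment bound, since the trajectory itself is a complicated function of the full root configuration.

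The paper avoids all of this by an \emph{area} argument rather than a distance argument, and needs only two probabilistic inputs: your separation (i) and your discrepancy bound (iv) (its Area Condition); conditions (ii), (iii), (v) are not used. From $z-N_p(z)=\big(\sum_j(z-\alpha_j)^{-1}\big)^{-1}$ and the Area Condition alone, a direct dyadic-shell calculation gives $|z_n-z_{n+1}|\ge C_{\mathrm{disp}}/(d\log d)$ whenever $z_n$ is at distance $\ge 1/d$ from every root. Citing \cite{D-draft}, the hyperbolic geodesic in the immediate basin $U_j$ from $z_n$ to $z_{n+1}$ has a Euclidean ``thick'' neighborhood $V_n\subset U_j\cap D_{3R/2+1/2}(0)$ of area at least $|z_n-z_{n+1}|^2/(2\tau)$ with $\tau<2\log d$, and $V_n\cap V_m=\emptyset$ once $|n-m|\ge\lceil 2\tau+6\rceil$. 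Since the $d$ immediate basins are pairwise disjoint subsets of a disk of bounded radius, the \emph{combined} number of far-region steps over all $d$ orbits is at most
\[
\pi(3R/2+1/2)^2\cdot\lceil 2\tau+6\rceil\cdot\frac{2\tau\,d^2\log^2 d}{C_{\mathrm{disp}}^2}=O(d^2\log^4 d).
\]
An analogous count on the shells $1/(8d^{2+\eta})\le\mathrm{dist}(z_n,\{\alpha_k\})<1/d$ contributes $O(d\log^5 d)$, and inside $1/(8d^{2+\eta})$ the separation (i) triggers genuine quadratic convergence at cost $O(\log|\log\eps|)$ per root. No trajectory-tracking, channel-width, or extremal-length estimate is needed; the whole argument is that many steps must sweep out much area, and there is only so much area available.
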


\begin{remark}
As stated, the theorem deals with $d$ distinguishable (i.e., ordered)
roots and their associated probability distribution.
We prove that the same result holds if we identify our polynomials in terms
of their sets of \emph{indistinguishable} roots, as two polynomials
$\prod_{j=1}^d (z-\al_j)$ and $\prod_{j=1}^d (z-\beta_j)$
are the same if their unordered sets of roots $\{\al_1,\ldots, \al_d\}$ and
$\{\beta_1,\ldots, \beta_d\}$ are equal (of course taking multiplicities into account).
\end{remark}
\begin{remark}
This bound on the number of iterations is optimal in the sense that
there is no bound on the number of iterations in the same generality
that for fixed $\eps$ has  asymptotics in $o(d^2)$ (see Remark~\ref{Rem:Optimal}), so we are away
from the best possible bound only by a factor of about $O(\log ^4d)$.
\end{remark}


\begin{remark}[Are We Proving the Right Theorem?]
One might ask how useful a result is that deals with finding the roots of a polynomial with given factorization. We would like to point out that for our analysis we do not assume that the roots are known: all we assume is that the degree is known and that the roots satisfy a certain bound (they are contained in the unit disk), which is easily achieved by rescaling. In this paper, we use on the space of polynomials  the probability measure $\mu_{\text{roots}}$ induced by the Lebesgue measure of the positions of the roots. One could equally well use various other measures on the space of normalized polynomials of given degree $d$, such as the measure $\mu_{\text{coeffs}}$ induced by Lebesgue measure on the complex coefficients of the polynomials, or possibly other measures depending on different contexts that specify the polynomials. 

For instance, in our applications we are often interested in the periodic points of some period $N$ of a given polynomial $q$ because these approximate the measure of maximal entropy on the Julia set in the dynamical plane of $q$ \cite{Lyubich}. These periodic points are roots of $p(z)=q^{\circ N}(z)-z$, but it may be neither desirable nor even feasible to compute the coefficients of $p$, and neither is it necessary to know them in order to find the roots. If $q$ has degree $d$, then evaluating $p$ by iteration takes $O(dN)$ computations, while even finding the coefficients of $p$ requires $O(d^N)$ computations.
Simple example: there are $2^N=1024$ periodic points of period $N=10$ of $q(z)=z^2+2$, and all satisfy $|z|\le 2$, while the constant coefficient of $q^{\circ N}(z)-z$ has magnitude about $2^{2^{10}}> 10^{300}$. Finding all $2^N=1024$ periodic points with our methods is very well feasible, but certainly \emph{not} by expanding $q^{\circ N}$ in coefficients! 

We see no reason why one measure on the space of polynomials would be more fundamental than any other, so we use the one that is most suitable in our context, and that is the measure $\mu_{\text{roots}}$. Of course, there is a well known map $F\colon \{\alpha_1,\dots,\alpha_d\}\mapsto (\text{coefficients})$ that maps roots to coefficients, and it gives accordingly a measure $F_*(\mu_{\text{roots}})$ on the space of coefficients, different from $\mu_\text{coeffs}$. 
The probabilistic nature of our results is essentially an estimate on the measure of the set of those polynomials where we cannot assure fast convergence of Newton's method (the set of ``bad polynomials'') with respect to $\mu_\text{roots}$; this is the set where polynomials have multiple or near-multiple roots. 

If one wants to estimate the set the ``bad polynomials'' with respect to $\mu_{\text{coeffs}}$, 
one needs to work out the exact bounds of the measure transformation $F_*$, and this would be an exercise on a well known algebraic map that is besides the point of the present paper. It might be worth pointing out that $F$ is a polynomial map that has all its singularities on the locus of multiple roots (everywhere else, it has a local algebraic inverse), so $F$ is especially contracting near the locus of multiple roots: in fact, the Jacobian of $F$ is just the absolute value of the Vandermonde determinant. 
 It thus is reasonable to believe that the probability in $\mu_{\text{coeffs}}$ of the set of ``bad polynomials'' is much smaller than in $\mu_\text{roots}$, so one could expect even better bounds with respect to the former probability distribution. The corresponding exercise is not the focus of the present paper and would distract from our main contributions.
\marginpar{add results on Burrus etc}
\end{remark}

\begin{remark}
This paper is a result of the bachelor thesis of the first named author at Jacobs University Bremen.
\end{remark}

\section{Good starting points for Newton's method}
\label{sec: prelim results}

Studying the geometry of the immediate basins outside the unit disk
$\mathbb{D}$,  in \cite{HSS} we proved the existence of a universal
starting set with $1.11 d\log^2 d$ points depending only on $d$ such
that for every polynomial of degree $d$ with all roots in the unit
disk, and for every root, there is a point in the set which is in the
immediate basin of this root. Enlarging the set by a factor of 3
approximately, in \cite{D-draft} we obtained a set of starting points
$\mathcal{S}_d$ which ensured that for each polynomial $p$ and each
root $\alpha$ there is a point $z$ in $\mathcal{S}_d$ intersecting
the immediate basin $U$ of $\alpha$ in the ``middle third`` of the
``thickest'' \textit{channel}, where a channel is an unbounded
connected component of $U \setminus \overline{\mathbb{D}}$. Being in
this middle third implies an
upper bound on the displacement $d_U(z, N_p(z))$ in terms of the
Poincar\'e metric of the immediate basin. We can guarantee that the
orbit of $z$ under iteration of the Newton map does not leave $D_R(0)$,
the disk of radius $R$ centered at the origin for some bounded value
of $R$; moreover, the hyperbolic geodesics within $U$ connecting any point on the orbit of $z$ to the next orbit point is also contained in the same $D_R(0)$.
We will refer to such points $z$ as having
\emph{$R$-central orbits}. 

More precisely, let $\mathcal{S}_d$ be defined as follows.
\begin{definition}[Efficient Grid of Starting Points]

For each degree $d$, construct a circular grid $\mathcal{S}_d$ as
follows. For $k = 1, \ldots, s = \lceil 0.4\log d\rceil$, set
$$r_k = (1+\sqrt{2})\left(\frac{d-1}{d}\right)^\frac{2k-1}{4s},$$
and for each circle around 0 of radius $r_k$, choose $\lceil
8.33d\log d\rceil$ equidistant points (independently for all the
circles).

\end{definition}

The set $\mathcal{S}_d$ thus constructed has $3.33(1+o(1))d\log^2 d$
points. The following theorem is proven in \cite[Theorem~8]{D-draft}.

\begin{theorem}\label{thm: set of starting points}
     For each degree $d$, the set $\mathcal{S}_d$ has the following
universal property. If $p$ is any complex polynomial, normalized so
that all its roots are in $\mathbb{D}$, then there are $d$ points
$z^{(1)}, \ldots, z^{(d)}$ in $\mathcal{S}_d$ whose Newton iterations
converge to the $d$ roots of $p$. If $\alpha$ is a root of $p$ and
$U$ is the immediate basin of $\alpha$, then there is an index $i$
such that $z^{(i)}\in U$ with $d_U(z^{(i)}, N_p(z^{(i)}))< 2\log
d$. In addition,  $z^{(1)}, \ldots, z^{(d)}$ have $R$-central orbits
for
      $$R \leq 5\left(\frac{d}{d-1}\right)^{\lceil 5\pi(1+\log d)\rceil}
< 5 \exp \left( \frac{\left\lceil 5\pi(1+\log d) \right\rceil}{d} \right) .$$
\end{theorem}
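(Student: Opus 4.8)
The plan is to build on the geometric analysis of immediate basins from \cite{HSS}, extracting the two quantitative bounds of the theorem (the $d_U$-displacement $<2\log d$ and the $R$-central property) from the extra room gained by enlarging the \cite{HSS} grid by a factor of three. First I would recall the structure of the Newton map $N_p$ of a normalized degree-$d$ polynomial: $\infty$ is a repelling fixed point with multiplier $\tfrac{d}{d-1}$, and by classical results (Przytycki) the immediate basin $U$ of every root $\alpha$ is simply connected and unbounded, so $\infty\in\partial U$ and $U\setminus\overline{\mathbb D}$ has at least one unbounded component, a \emph{channel}. The geometric core, which I would take as given from \cite{HSS}, is a lower bound on the width of the thickest channel: there is a channel of $U$ and a radius $r$ with $1<r\le 1+\sqrt2$, occurring among the $r_k=(1+\sqrt2)\big((d-1)/d\big)^{(2k-1)/(4s)}$, $s=\lceil 0.4\log d\rceil$, such that $\{|z|=r\}$ meets this channel in an arc of angular width $\gtrsim 1/(d\log d)$; this is precisely what the \cite{HSS} set of $1.11\,d\log^2d$ points is designed to detect.

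Next I would locate the starting points. Since $\mathcal S_d$ is the \cite{HSS} grid with the number of points on each circle (essentially) tripled, to $\lceil 8.33\,d\log d\rceil$, the angular spacing on each circle becomes three times finer, so a point of $\mathcal S_d$ lands not merely inside the arc cut out by the thickest channel but in its \emph{middle third}. Carrying this out for each of the $d$ immediate basins gives points $z^{(1)},\dots,z^{(d)}$, pairwise distinct because immediate basins are disjoint, and each Newton orbit converges to the corresponding root simply because $z^{(i)}$ lies in its immediate basin.

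The remaining content is the two quantitative bounds. For $d_U(z^{(i)},N_p(z^{(i)}))<2\log d$ I would uniformize the channel near $\infty$: in the coordinate $\zeta=\log z$ the relevant part of the channel is comparable to a horizontal half-strip of width $w\gtrsim 1/(d\log d)$, whose Poincar\'e metric is, up to a Koebe-type distortion factor that stays bounded precisely because the middle-third condition keeps $z^{(i)}$ away from $\partial U$, comparable to $\tfrac{\pi}{w}\,|d\zeta|$ near the strip's core. Since $N_p(z)=\tfrac{d-1}{d}z\,(1+O(1/z))$ moves $\zeta$ toward the mouth of the channel by $\log\tfrac{d}{d-1}+O(1/|z|)\approx 1/d$, multiplying by $\tfrac{\pi}{w}\lesssim \pi d\log d$ gives a hyperbolic displacement $O(\log d)$, and tracking the constants (where $r\le 1+\sqrt2$ and the sharp \cite{HSS} width estimate enter) sharpens this to $2\log d$; Schwarz--Pick monotonicity then propagates the bound along the whole orbit. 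For the $R$-central property one bounds how far the orbit and the geodesics $[z^{(i)}_n,z^{(i)}_{n+1}]\subset U$ joining consecutive orbit points can stray from $0$: near the repelling point $\infty$, $U$ is in the linearizing coordinate a union of sectors along which the Euclidean radius changes by the factor $\tfrac{d}{d-1}$ per fundamental step, so a geodesic of hyperbolic length $<2\log d$ crosses at most $\lceil 5\pi(1+\log d)\rceil$ of them and hence stays in $D_R(0)$ with $R\le(1+\sqrt2)\big(\tfrac{d}{d-1}\big)^{\lceil 5\pi(1+\log d)\rceil}<5\,\big(\tfrac{d}{d-1}\big)^{\lceil 5\pi(1+\log d)\rceil}$; the closed form $5\exp(\lceil 5\pi(1+\log d)\rceil/d)$ follows from $\big(\tfrac{d}{d-1}\big)^k\le e^{k/(d-1)}$.

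The main obstacle is exactly this last quantitative step: turning the soft pictures ``channel $\approx$ half-strip'' and ``repelling fixed point $\approx$ linear map'' into the asserted \emph{explicit} constants $2\log d$ and the above $R$ requires tight Koebe distortion bookkeeping and a sharp form of the \cite{HSS} width lemma. One also has to dispose of the degenerate case of a root of multiplicity $m>1$, where $\alpha$ is merely attracting (multiplier $1-1/m$) rather than superattracting and one must supply $m$ starting points in the basin; the channel geometry and all the estimates above survive this with only cosmetic changes.
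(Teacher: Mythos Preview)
The paper does not prove this theorem at all: the sentence immediately preceding the statement reads ``The following theorem is proven in \cite[Theorem~8]{D-draft}'', and no further argument is given. So there is no ``paper's own proof'' to compare your attempt against; the result is simply imported as a black box and then used in the rest of the paper.

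That said, your sketch is a faithful outline of how the cited result is actually established in \cite{D-draft}: the channel-width lower bound from \cite{HSS}, the tripling of the grid so that a point lands in the middle third of the channel arc, the half-strip model in the coordinate $\zeta=\log z$ yielding a hyperbolic displacement of order $(\pi/w)\cdot(1/d)=O(\log d)$, and the count of fundamental domains near the repelling fixed point $\infty$ to bound how far a geodesic of hyperbolic length $<2\log d$ can drift radially. You also correctly identify the genuine difficulty, namely converting these soft comparisons into the specific constants $2\log d$ and $\lceil 5\pi(1+\log d)\rceil$; that bookkeeping is done in \cite{D-draft}, not here.

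One minor slip: the inequality you quote, $\bigl(\tfrac{d}{d-1}\bigr)^k\le e^{k/(d-1)}$, is correct but does not give the stated bound $5\exp(k/d)$ with $d$ (rather than $d-1$) in the denominator; in fact $d/(d-1)>e^{1/d}$, so the second inequality in the displayed formula of the theorem is literally false (for $d=100$ the first expression is about $12.23$ while the second is about $12.18$). This is a defect of the statement as quoted, not of your argument, and it is immaterial for the use the paper makes of the theorem, since all that is needed later is that $R$ is uniformly bounded in $d$.
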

For $d = 100$, we have $R < 12.3;$ for $d = 1000$, we have $R < 5.7$;
and asymptotically the upper bound on $R$ tends to $5$.

     The result provides an upper bound for $R$ that is uniform in $d$.
This set of starting points will be the basis for the discussion
which follows.

      \section{Uniformly distributed roots}\label{sec: result}
In this manuscript we investigate the Newton map for complex
polynomials with randomly distributed roots. In this section, we fix
notation and give the strategy of the
proof of our main result, Theorem~\ref{Thm:Main}.

Let $\alpha$ be a simple root
of a polynomial $p(z)=(z-\alpha_1)(z-\alpha_2)\cdot\dots\cdot(z-\alpha_d)$ of degree $d$ and $U$ be the immediate basin of
attraction of $\alpha$. The associated Newton map is $N_p(z)=z-p(z)/p'(z)$.
By the discussion in the
previous section, there exists $z_1\in \mathcal{S}_d$ with
$R$-central orbit in $U$; in particular, under iteration of the Newton map
$N_p$ the orbit converges to $\alpha$ and stays within $D_R(0)$. Let
$z_{n+1} := N_p(z_{n})$ for $n\geq 1$. For any two
consecutive points $z_n$ and $z_{n+1}$ along the orbit of $z_1$,  in
\cite[Section~4]{D-draft} we constructed ``thick'' curves that connect these two points and that,
roughly speaking, ``use up'' area at least $|z_n -
z_{n+1}|^2/(2\tau)$ with $\tau := d_U(z_1, z_2) < 2\log d$. These ``thick'' curves will be described in Lemma~\ref{lemma: area bound}; basically, they
are certain neighborhoods (with respect to the Euclidean metric of $\C$) of the hyperbolic geodesic in $U$ connecting $z_n$ to $z_{n+1}$; the main task in \cite{D-draft} is to show that these Euclidean neighborhoods are contained in $U$ and that they are ``almost'' disjoint in the sense that each element of area is accounted for no more than $2\tau+6$ times \cite[Lemma~11]{D-draft}. The available Euclidean area within $U\cap D_R(0)$ thus bounds the possible number of iteration steps for which $|z_n-z_{n+1}|$ satisfies a certain lower bound; conversely, when $|z_n-z_{n+1}|$ is small, then we are in the domain of quadratic convergence (near the root $\alpha)$ and only $O(\log|\log\eps|)$ further iterations are required.

The bound $O(d^3\log^3d +d\log|\log \varepsilon|)$ on the number of iterations in \cite{D-draft}
follows from lower bounds on the
displacements $|z_n - z_{n+1}|$ along the orbit. The main improvement
in this paper
is on the lower bounds on the displacements when the roots are
randomly distributed.

As in \cite{D-draft}, we partition $D_{R}(0)$ (the disk of radius $R$
centered at the origin) into domains
\[
S_k := \left\{z\in D_{R}(0): \min_j|z-\alpha_j| \in \left(2^{-(k+1)},
2^{-k}\right]\right\}, \ k \in \mathbb{Z}
\;.
\]
It turns out that if the roots are randomly distributed in the unit
disk, then with high probability (at least $1-1/d$) the following holds (see Corollary~\ref{cor: displacement}): there exists a
universal constant $C >0$ such that for every $n$ we have the following
estimates 
\[ |z_n - z_{n+1}| \ge \left\{ \begin{array}{ll}
            \frac{C}{d\log d} & \mbox{if $z_n\in S_k$ with $k
\leq \log_2 d$};\\
           \frac{C}{2^k k } & \mbox{otherwise}.\end{array} \right.
\]
If $z_n\in S_k$ with $k\le \log_2d$, then we say that we are ``in the
far case'', as $z_n$ is far from all the roots. Since each such
iteration ``uses up'' an area of at least $|z_n-z_{n+1}|^2/(2\tau)$, 
and at least one in $2\tau+6$ such regions contributes with its full area to the total space required, the total number of orbit
points in the far case is bounded by $O(d^2\log^4d)$.

On the other hand,
\cite[Lemma~16]{D-draft} says that if the orbit gets very close to
some root in comparison to the other roots, then it has entered the
region of quadratic convergence of that root where only
$\log_2|\log_2 \varepsilon - 5|$ iterations are sufficient to approximate it
within an $\varepsilon$-neighborhood. We call this ``the near case''.

For randomly distributed roots, the mutual distance between roots is
large enough so away from the region of quadratic convergence, we
only need to consider essentially $k \le 3+5/2\log_2 d$. We define the ``intermediate case'' as those $z_n\in S_k$
with $\log_2d< k\le 3+5/2\log_2 d$. Each domain $S_k$ has area
$O(d 4^{-k})$ and each iteration with $z_n\in S_k$ uses area about
$(C/2^kk)^2/2\tau\approx C^2/4^kk^2\tau$, the number of orbit points
in the intermediate case is at most $O(dk^2\tau)$ for each $k$, times
the usual factor $2\tau+6$ to make the areas disjoint. But
$\log_2d<k\leq 3+5/2\log_2 d$ and $\tau=O(\log d)$, so the total
number of iterations in the intermediate case is $O(d\log^5d)$; this is dominated by the ``far case''.

In the subsequent sections we will make these arguments precise.

\subsection{Distribution of the roots}\label{sec: root distribution}
In order to get a lower bound on the expected displacement, we will
first investigate the distribution of the roots. We will be
interested in two different kinds of probability spaces. The first
space $\mathcal{P}_d =\{(x_1, \ldots, x_d) : x_i\in \mathbb{D}\}$
consists of all polynomials with $d$ \textit{distinguishable} roots
in the unit disk, normalized so as to have leading coefficients $1$,
and the probability measure is induced by Lebesgue
measure on $\mathbb{D}^d$. The second space $\mathcal P_d/\Sigma_d$
consists of all polynomials with \textit{indistinguishable} roots in
the unit disk, i.e.\ the quotient probability space of the standard
action of the symmetric group $\Sigma_d$ on $\mathcal{P}_d$ defined by
permuting the roots.

The following lemma is the probabilistic ingredient of the main
theorem. It certainly isn't new, but easier verified than
looked up in the library.

\begin{lemma}[Base-$d$ numbers]\label{lemma: numbers}
     Let $M_d$ be the set of all $d$-digit numbers in base $d$. There exists a universal constant $C >0$ such that for each $d\in \mathbb{N}$ we have:

(a) The probability that a randomly chosen number $a\in M_d$
does not have a digit repeating more than $C \log d$ times is at
least $1-1/d$.

     (b) Let $\sim$ be the equivalence relation on $M_d$ defined as
follows: $a \sim b \Leftrightarrow \exists \sigma \in \Sigma_d$ with $a =
\sigma b$, i.e.\ two elements are equivalent if they have the same
sets of digits counted with multiplicities. Then the probability that
a randomly chosen element $[a]\in M_d/\sim$ does not have a
digit repeating more than $C \log d$ is at least $1-1/d$.

\end{lemma}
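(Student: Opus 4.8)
The plan is to estimate, for a uniformly random $d$-digit base-$d$ number $a$, the probability that some digit appears more than $C\log d$ times, and then bound this by $1/d$ for a suitable universal $C$. Fix a digit $v\in\{0,1,\dots,d-1\}$ and a threshold $t=\lceil C\log d\rceil$. The number of positions among the $d$ positions of $a$ that carry the value $v$ is a $\mathrm{Binomial}(d,1/d)$ random variable. By a standard Chernoff/union bound, $\Pr[\#\{i: a_i=v\}\ge t] \le \binom{d}{t}(1/d)^t \le (e/t)^t$, using $\binom{d}{t}\le (ed/t)^t$ and cancelling the $(1/d)^t$. Taking a union bound over the $d$ possible digit values, the probability that \emph{some} digit repeats at least $t$ times is at most $d\,(e/t)^t$. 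It now suffices to choose $C$ so that $d\,(e/t)^t\le 1/d$, i.e. $t(\ln t - 1)\ge 2\ln d$; since $t\ge C\log d$ and $\ln t \to\infty$, any fixed $C$ with, say, $C\ge 3$ works for all $d$ large enough, and one absorbs the finitely many remaining small $d$ by enlarging $C$ (or noting the bound is trivial there). This proves part (a).

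For part (b), the key observation is that passing to the quotient $M_d/\sim$ does \emph{not} change the relevant probabilities adversely, because the property ``no digit repeats more than $C\log d$ times'' is $\Sigma_d$-invariant: it depends only on the multiset of digits, hence descends to a well-defined property of equivalence classes $[a]$. The subtlety is that the uniform measure on $M_d/\sim$ is \emph{not} the pushforward of the uniform measure on $M_d$ — equivalence classes have different sizes (a class with all digits distinct has $d!$ elements, one with repetitions has fewer). However, classes with \emph{more} repetition are \emph{smaller}, so the pushforward measure puts \emph{less} weight on the ``bad'' classes than the uniform measure on $M_d/\sim$ does — wait, that is the wrong direction. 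The clean fix is to observe that the map $M_d \to M_d/\sim$ is at most $d!$-to-one and at least $1$-to-one, so for any $\sim$-invariant set $B\subseteq M_d/\sim$ with preimage $\widetilde B\subseteq M_d$ we have $|\widetilde B|/|M_d| \ge |B|/|M_d/\sim| \cdot (1/d!) \cdot (\text{something})$; this crude comparison loses a factor of $d!$ and is too weak.

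The right approach for (b) is therefore to redo the first-moment computation \emph{directly} on $M_d/\sim$, or — more simply — to reduce to (a) via the correct monotonicity. Here is the correct monotonicity: among all equivalence classes, those corresponding to digit-multisets with a repetition of length $\ge t$ are exactly the ``bad'' ones, and a bad class $[a]$ has size $d!/\prod_v m_v!$ where $m_v$ is the multiplicity of $v$; since at least one $m_v\ge t$, the class size is at most $d!/t!$, whereas the total $|M_d| = d^d$ and $|M_d/\sim| = \binom{2d-1}{d}$. So $\Pr_{[a]}[\text{bad}] = \big(\text{\# bad classes}\big)/\binom{2d-1}{d}$ while $\Pr_a[\text{bad}] \ge \big(\text{\# bad classes}\big)\cdot 1 / d^d$ (each bad class contributes at least one element). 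This again goes the wrong way. Cleanest of all: simply repeat the Chernoff argument intrinsically on $M_d/\sim$. The number of digit-multisets of size $d$ over $d$ symbols in which symbol $v$ has multiplicity $\ge t$ is $\binom{(d-t)+(d-1)}{d-1}=\binom{2d-t-1}{d-1}$, so the bad probability is at most $d\binom{2d-t-1}{d-1}/\binom{2d-1}{d}$; using $\binom{2d-t-1}{d-1}/\binom{2d-1}{d} = \prod_{j=0}^{t-1}\frac{d-j}{2d-1-j}\le (1/2+o(1))^t$, we get bad probability $\le d\cdot 2^{-t(1-o(1))}$, which is $\le 1/d$ once $t\ge C\log d$ with $C$ a large enough universal constant.

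\medskip
The main obstacle is precisely this subtlety in part (b): one must avoid the naive "pushforward loses $d!$'' trap and instead run the union-bound/first-moment estimate directly in the quotient, where the counting is a product of ratios $\tfrac{d-j}{2d-1-j}$ that is geometrically small — this is what makes $C\log d$ (rather than, say, $C\log d/\log\log d$) the correct threshold and keeps $C$ universal. Once both counting bounds are in place, choosing a single $C$ that works for all $d$ (handling small $d$ by the trivial bound) completes the proof. \qed
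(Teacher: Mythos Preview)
Your proposal is correct and, once the exploratory false starts in part~(b) are stripped away, follows essentially the same route as the paper: for~(a) a union bound over the $d$ digit values combined with the estimate $\binom{d}{t}d^{-t}\le 1/t!$ (your $(e/t)^t$ is the Stirling-equivalent form), and for~(b) the direct count of multisets with a fixed symbol of multiplicity $\ge t$ as $\binom{2d-t-1}{d-1}$, followed by the ratio $\binom{2d-t-1}{d-1}\big/\binom{2d-1}{d-1}=\prod_{j=0}^{t-1}\tfrac{d-j}{2d-1-j}\le \tfrac{d}{2d-1}\,2^{-(t-1)}$ and a union bound over the $d$ symbols. The paper arrives at exactly these formulas; the only difference is that your write-up records the dead ends (pushforward comparison, class-size bounds) before settling on the direct multiset count, whereas the paper goes straight there.
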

\begin{proof}
(a)  For fixed $i$, the number of $d$-digit numbers which contain at
least $m$ digits $i$ is at most
$\binom{d}{m}d^{d-m}$. Thus the number of $d$-digit numbers
which contain a symbol repeating at least $m$ times is at most
      $$d \binom{d}{m}d^{d-m} < \frac{d}{m !} d^{d}.$$
      So the probability that a randomly selected number in $M_d$
contains at least $m$ identical digits is at most $\frac{d}{m
!}$ since $|M_d| = d^d$. Therefore, with probability at least
$1-\frac{d}{m!}$, a randomly selected number in $M_d$ does not
have a digit repeating more than $m$ times.

      Note that if $m!\ge d^2$ we have $1-\frac{d}{m!} \geq 1 -
\frac{1}{d}$. Therefore, by taking $m$ such that $(m - 1)!
< d^2 \leq m !$ (which implies that $m$ is of magnitude  $O(\log d)$), we
prove the first part of the claim.

     (b)  Note that the elements of $ M_d/\sim$ can be mapped bijectively
to the set $\hbox{Mult}_d = \{(x_0, \ldots, x_{d-1}) : x_i \in
\mathbb{Z}_{\geq 0}, x_0 + \ldots + x_{d-1} = d\}$ as follows: for
$[a]\in M_d/\sim$ let $x_i$ be the multiplicity of digit $i$
in every $a\in [a]$. It is well known and easy to see that

\begin{equation}\label{eq: multiset coefficient}
     \left|\left\{\rule{0pt}{10pt}(x_0, \ldots, x_{r-1}) : x_i\in
\mathbb{Z}_{\geq 0}, x_0 + x_1 + \ldots + x_{r-1} = n\right\}\right|
= \binom{n + r - 1}{r - 1}
\;.
\end{equation}
Thus we have $|\hbox{Mult}_d| = \binom{2d - 1}{d-1}$. On the
other hand, the number of elements in $\hbox{Mult}_d$ with first
component at least $m$ is equal to the cardinality of
\[
\left\{(x_1, \ldots, x_{d-1}) : x_i \in \mathbb{Z}_{\geq 0}, x_1 + \ldots
+ x_{d-1} \leq d-m \right\}
\]
which has the same cardinality as
\[
\{(y_0, x_1, \ldots, x_{d-1}) : y_0, x_i \in \mathbb{Z}_{\geq 0}, y_0 +
x_1 + \ldots + x_{d-1} =  d-m \}
\;.
\]
Again by (\ref{eq: multiset coefficient}) this quantity equals to
$\binom{2d - m - 1}{d-1}$. Therefore, the number of elements of
$\hbox{Mult}_d$ with a component at least $m$, i.e.\ the number
of elements of $M_d/\sim$ with a digit repeating at least $m$
times, is at most $d\binom{2d - m - 1}{d-1}\;.$
Hence the probability that a number of $M_d/\sim$ has a digit
repeating at least $m$ times is at most
\begin{align*}
d\frac{\binom{2d - m - 1}{d-1}}{\binom{2d - 1}{d-1}} 
&= d\frac{
(2d - m - 1)! (d-1)! d!}{(2d-1)! (d-1)! (d - m)!} = 
\hide{ 
\\ &= d\frac{(d-m + 1)(d-m + 2)\cdots d}{(2d - m)(2d -
m + 1)\cdots (2d - 1)} \leq d \left(\frac{1}{2}\right)^{m -
1}\frac{d}{2d - 1}.
}
\\ 
&= d\frac{d (d-1) \cdots (d-m + 2)(d-m + 1)}{(2d-1)(2d-2)\cdots (2d -
m + 1)(2d - m)} \leq d \left(\frac{1}{2}\right)^{m -
1}\frac{d}{2d - 1}.
\end{align*}
 Hence for $m = \lceil 2\log_2 d + 1
\rceil$ (which is of magnitude $O(\log d)$) the second part of the claim follows.
\end{proof}

If the roots are randomly distributed in the unit disk one should
expect that the number of roots in a region is proportional to its
area. The previous claim easily implies the following statement.

\begin{lemma}\label{lemma:disk}

There exists a universal constant $C >0$ such that, for each degree $d\in \mathbb{N}$, the following holds with probability at least $1-1/d$: if a polynomial with roots $\alpha_1, \ldots, \alpha_d$ is randomly chosen in $\mathcal{P}_d$ or $\mathcal{P}_d/ \Sigma_d$, then every disk in $\C$ of area $A$ contains not more than $k(A)$ points among $\alpha_1, \ldots, \alpha_d$, where
\[
k(A) = \left\{ \begin{array}{ll}
              C d \log d\cdot A & \mbox{if $A \geq 1/d$}; \\
            C\log d& \mbox{otherwise}.\end{array} \right.
\]
\end{lemma}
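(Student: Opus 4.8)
The plan is to discretize the unit disk and reduce the statement to the combinatorial Lemma~\ref{lemma: numbers}. The first step is to fix, for every $d$, a partition $\mathbb{D}=B_1\cup\dots\cup B_d$ into $d$ Borel pieces, each of area exactly $\pi/d$ and of Euclidean diameter at most $c_0/\sqrt d$ for some universal constant $c_0$. Such a partition can be built by cutting $\mathbb{D}$ into roughly $\sqrt d$ concentric rings of radial width $\approx 1/\sqrt d$ and then slicing the $i$-th ring into $\approx 2i-1$ equal-area circular sectors: the radial extent of each sector is $O(1/\sqrt d)$ and its maximal arc length is also $O(1/\sqrt d)$, so the diameter bound follows, and the areas can be made exactly $\pi/d$ by choosing the angular widths; for $d$ not a perfect square one adjusts the number of rings and of sectors per ring, which only changes the constant $c_0$. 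Sending a configuration $(\alpha_1,\dots,\alpha_d)\in\mathbb{D}^d$ to the word $(j_1,\dots,j_d)$ with $\alpha_i\in B_{j_i}$ defines a map $\mathcal{P}_d\to M_d$; because the pieces have equal area and the $\alpha_i$ are independent and uniform, this map pushes the (normalized Lebesgue) measure of $\mathcal{P}_d$ forward to the uniform measure on $M_d$, and since it is equivariant for the $\Sigma_d$-actions it also handles the quotient situation $\mathcal{P}_d/\Sigma_d\to M_d/\sim$. Note that the event ``some piece contains many roots'' is permutation invariant, so the two probability spaces can be treated simultaneously.

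Then I would apply Lemma~\ref{lemma: numbers} (part (a) for $\mathcal{P}_d$, part (b) for $\mathcal{P}_d/\Sigma_d$) to get a universal constant $C_1>0$ such that with probability at least $1-1/d$ no digit of the resulting base-$d$ number repeats more than $C_1\log d$ times; equivalently, with probability at least $1-1/d$ every piece $B_j$ contains at most $C_1\log d$ of the roots. From here the argument is deterministic. Let $D\subset\C$ be any disk of area $A$ and radius $\rho=\sqrt{A/\pi}$. If a piece $B_j$ meets $D$, then, having diameter at most $c_0/\sqrt d$, it is contained in the concentric disk of radius $\rho+c_0/\sqrt d$; since the $B_j$ are pairwise disjoint of area $\pi/d$ each, at most $d\,(\rho+c_0/\sqrt d)^2$ of them meet $D$. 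As all roots lie in $\mathbb{D}$, every root in $D$ lies in one such piece, so $D$ contains at most $d\,(\rho+c_0/\sqrt d)^2\cdot C_1\log d$ roots.

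It remains to read off $k(A)$ in the two regimes, using $(\rho+c_0/\sqrt d)^2\le 2A/\pi+2c_0^2/d$. If $A\ge 1/d$ then $2c_0^2/d\le 2c_0^2A$ and the bound becomes $(2/\pi+2c_0^2)C_1\cdot dA\log d$; if $A<1/d$ then $2A/\pi\le 2/(\pi d)$ and the bound becomes $(2/\pi+2c_0^2)C_1\cdot\log d$. Setting $C:=(2/\pi+2c_0^2)C_1$, which is universal, yields the claim. I expect the only genuinely delicate point to be the construction in the first step: producing, uniformly in $d$, a partition of $\mathbb{D}$ into $d$ pieces that are simultaneously of equal area and of diameter $O(1/\sqrt d)$, and checking that the digit map is measure preserving on both spaces; everything after that is elementary area counting. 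If insisting on equal areas turns out to be inconvenient, pieces of area between $1/d$ and $c/d$ work just as well, replacing the uniform measure on $M_d$ by a product measure with marginals bounded by $c/d$ and absorbing a constant factor into the $C_1\log d$ estimate of Lemma~\ref{lemma: numbers}.
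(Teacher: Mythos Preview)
Your proof is correct and follows essentially the same approach as the paper: partition $\mathbb{D}$ into $d$ equal-area pieces of diameter $O(1/\sqrt d)$, invoke Lemma~\ref{lemma: numbers} to cap the count per piece by $O(\log d)$, then transfer to arbitrary disks. The paper first reduces to the case where $d$ is an odd square (by adjoining extra random roots) to make the annulus-and-sector partition exact, whereas you construct the partition for general $d$ directly; and the paper carries out the transfer via an auxiliary square grid and dyadic scaling, whereas your containment-and-area count (pieces meeting $D$ lie in the concentric disk of radius $\rho+c_0/\sqrt d$, hence there are at most $d(\rho+c_0/\sqrt d)^2$ of them) is more direct and yields the two regimes in one stroke. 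Your observation that the relevant event is $\Sigma_d$-invariant, so that part~(a) of Lemma~\ref{lemma: numbers} already covers $\mathcal{P}_d/\Sigma_d$, is also a clean shortcut.
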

\begin{proof}
We first argue that it suffices to prove the claim in the case when $d$ is an odd square. If not, then let $D$ be the smallest odd square bigger than $d$; adjoining $D-d$ additional random roots to a new polynomial of degree $D$, the claim holds with probability $1-1/D\ge 1-1/d$. In the process, the constant $C$ may change by no more than $D\log D/d\log d$, but this factor is bounded (and tends to $1$ as $d\to\infty$). 

We may thus assume that $d = (2k+1)^2$ for an integer $k$.
Then the unit disk can be subdivided into $d$ pieces
as follows (compare Fig.~\ref{fig: circle partitioned}): the first
piece is a disk with center $0$ and radius $r_0 = 1/\sqrt{d};$ next,
consider the annuli $A_s$ bounded between circles around $0$ of radii
$(2s-1)r_0$ and $(2s+1)r_0$ for $s = 1, \ldots k$, and subdivide each
annulus $A_s$ into exactly $8s$ pieces of equal area by drawing $8s$
radial segments. Thus we construct exactly $d$ pieces with equal area
and diameters comparable with $r_0$. Now, let a polynomial with roots $\alpha_1, \ldots, \alpha_d$ be randomly chosen in $\mathcal{P}_d$ or $\mathcal{P}_d/\Sigma_d$. By Lemma \ref{lemma: numbers} it
follows that each of the pieces contains at most $O(\log d)$ 
of the
points $\alpha_1, \ldots, \alpha_d$ with probability at least $1-1/d$ (in both
cases of distinguishable and indistinguishable roots): in the case of
distinguishable roots, the $i$-th digit of a $d$-digit number
specifies the number of the piece containing the $i$-th root; in the
other case, the same symmetries apply on both sides of the equality.

\begin{figure}
      \begin{center}
      \framebox{\includegraphics[width=0.5\textwidth]{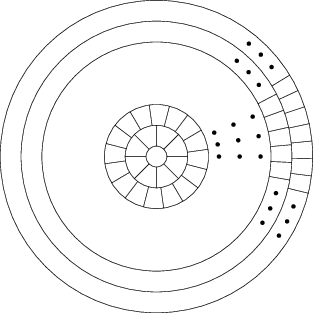}}
      \caption{Partition of the unit disk into smaller pieces of comparable sizes.\label{fig: circle partitioned}}
      \end{center}
\end{figure}

Hence, the claim is true for that particular partition of the unit
disk. This implies the general claim as follows. It is easy
to see that each square of side length at most $r_0$ in the complex
plane can intersect at most a constant number $C_1$ of these pieces,
where $C_1$ does not depend on $d$. Consider
a square $S$ for which the unit circle is inscribed, for example the
one with sides parallel to the real and imaginary axes. Subdivide it
into $d$ equal squares of
side length $r_0 $ (using the fact that $d$ is a square of an integer and
$r_0=1/\sqrt d$). Then each of these smaller
squares will intersect at most $C_1$ pieces from the partition of the
unit disk (some squares will not intersect any). Therefore, each of
the small squares contains at most $C_2\log d$ points for some
constant $C_2$ that does not depend on $d$. Since each square of
side length $r_0$ (possibly rotated) intersects at most $9$ of these
squares dividing $S$, we conclude that each square of side length
$r_0$ contains, with probability at least $1-1/d$,
at most $C_3\log d$ of the points $\alpha_1,\ldots, \alpha_d$ for $C_3 = 9 C_2$. If
we group every 4 neighboring small squares (of side length $r_0$) and
repeat the argument, we get that each square of side length $2r_0$
contains at most $4 C_3\log d$ points, and so on for squares of side
length $4r_0, 8r_0,\ldots $. Thus an arbitrary square of side length
$x\in[2^kr_0,2^{k+1}r_0]$ with $k\geq 0$ contains at most $2^{2k + 2} C_3\log d\le 4C_3 (x^2/r_0^2)\log d \approx 4C_3x^2d\log d$ points since it is contained in some square of side length $2^{k+1}r_0$. Thus, by enlarging the
constant by a factor of 4, the lemma will hold true for squares. Since
each disk of radius $r$ is contained in a square of side length $2r$,
the bound on the number of points in an arbitrary disk follows.
\end{proof}

Now we prove the following claim about the mutual distance for
randomly distributed points in the unit disk; we will use this in Lemma~\ref{lem: quadratic convergence} to derive sufficient conditions for fast convergence. The next few lemmas will use a parameter $\eta>0$ that we want to keep flexible for now, until we fix a choice in Theorem~\ref{thm:one root}.

\begin{lemma}\label{lemma: mutual distance}
      Let the polynomial $p$ be randomly chosen in $\mathcal{P}_d$ or $\mathcal{P}_d/\Sigma_d$. Then for any $\eta > 0$ the mutual distance between any pair of its roots is at least ${1}/{d^{1+\eta}}$ with probability at least $1-1/d^{2\eta}$.
\end{lemma}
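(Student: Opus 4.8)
The plan is to run a straightforward union bound over all pairs of roots — essentially a ``birthday problem'' estimate. First I would work in $\mathcal{P}_d$, where $\alpha_1,\dots,\alpha_d$ are independent and uniformly distributed in $\mathbb D$. Fix a pair of indices $i\neq j$. Conditioning on the position of $\alpha_i$, the event $|\alpha_i-\alpha_j|<1/d^{1+\eta}$ means exactly that $\alpha_j$ falls into the disk $D_{d^{-(1+\eta)}}(\alpha_i)$; since this disk has area $\pi d^{-2(1+\eta)}$ while $\mathbb D$ has area $\pi$ (and intersecting the small disk with $\mathbb D$ can only shrink it), the conditional probability of this event is at most $d^{-2(1+\eta)}$, uniformly in the position of $\alpha_i$. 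Hence the unconditional probability that $|\alpha_i-\alpha_j|<d^{-(1+\eta)}$ is at most $d^{-2(1+\eta)}$.

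Then I would sum over all $\binom{d}{2}<d^2/2$ unordered pairs: the probability that \emph{some} pair of roots is closer than $d^{-(1+\eta)}$ is at most
\[
\frac{d^2}{2}\cdot d^{-2(1+\eta)}=\frac12\, d^{-2\eta}<d^{-2\eta}.
\]
So in $\mathcal P_d$ the mutual distance between every pair of roots is at least $d^{-(1+\eta)}$ with probability at least $1-1/d^{2\eta}$, as claimed.

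To obtain the statement in $\mathcal P_d/\Sigma_d$, I would observe that the ``bad set''
\[
B=\bigl\{(\alpha_1,\dots,\alpha_d)\in\mathcal P_d:\ \min_{i\neq j}|\alpha_i-\alpha_j|<d^{-(1+\eta)}\bigr\}
\]
is invariant under the action of $\Sigma_d$ permuting the coordinates. Therefore $B$ is the full preimage of a measurable subset of $\mathcal P_d/\Sigma_d$ having the same measure with respect to the quotient probability measure (the pushforward of normalized Lebesgue measure on $\mathbb D^d$), so the bound $<d^{-2\eta}$ transfers verbatim and both cases are covered.

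I do not expect any substantial obstacle here; this lemma is of the same elementary flavor as Lemma~\ref{lemma: numbers}. The only points that need a little care are that the small disk $D_{d^{-(1+\eta)}}(\alpha_i)$ may protrude beyond $\mathbb D$ — harmless, since this only lowers the relevant probability — and that the passage to the quotient is legitimate precisely because the event in question is symmetric under permutation of the roots.
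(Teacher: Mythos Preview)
Your argument is correct and in spirit identical to the paper's: both are elementary ``birthday'' area estimates resting on the single fact that the probability two given roots lie within distance $r$ is at most $r^2$. The only difference is presentational. You go straight to a union bound over the $\binom d2$ pairs and obtain $\tfrac12 d^{-2\eta}$. The paper instead places the roots sequentially, bounding the probability of success from below by $\prod_{k=1}^{d-1}(1-kr^2)$, and then passes through the logarithm to arrive at $1-d^2r^2$; this is the same union bound in disguise (indeed $\prod_k(1-kr^2)\ge 1-\sum_k kr^2$), just with a slightly worse constant. Your direct route is a bit cleaner and avoids the side condition $dr^2<1/2$. The handling of $\mathcal P_d/\Sigma_d$ via $\Sigma_d$-invariance of the bad set is exactly what the paper does as well.
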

\begin{proof}
      First, note that the claim for a randomly chosen polynomial in
$\mathcal{P}_d/\Sigma_d$ is equivalent to the claim for a randomly chosen
polynomial in $\mathcal{P}_d$ (the symmetry group $\Sigma_d$ acts in the same way on the set of all roots both on the space of all polynomials, and on the subspace of those where the distance between roots is at least $1/d^{1+\eta}$; polynomials with exact multiple roots have probability zero).

Choosing randomly a
polynomial in $\mathcal{P}_d$ is equivalent to choosing randomly and
independently its roots. For a positive number $r$, the
probability $p_{d,r}$ that $d$ uniformly and independently
distributed points in the unit disk have mutual distance at least $r$
is at least
\[
p_{d,r}\geq (1 - r^2)(1-2r^2)\ldots (1-(d-1)r^2)
\]
(the unit disk has area $\pi$, and after $k$ roots are selected, the
$k+1$-st root must avoid an area of at most $k\pi r^2$; this has
probability $(\pi-\pi kr^2)/\pi=1-kr^2)$.

Since $\log(1+x) \ge x/(1+x)$ for $x>-1$, we get
\[
\log p_{d,r}\ge \sum_{k=1}^{d-1} \log(1-k r^2) \ge \sum_{k = 1}^{d-1}
\frac{-k r^2}{1-k r^2}
\ge -r^2 \frac{\sum_{k = 1}^{d-1} k}{1-d r^2} \ge -r^2
\frac{d^2/2}{1-d r^2} \ge
-d^2 r^2
\;,
\]
where the second and last inequalities hold if $d r^2 < 1/2.$ Hence
\[
p_{d, r}\ge
\exp(-d^2 r^2)\ge 1 - d^2 r^2
\;.
\]
If $r = 1/d^{1+\eta}$ (which implies $d r^2 < 1/2$),
then $p_{d,r}\ge
1-1/d^{2\eta}$ and thus the claim follows.
\end{proof}

We combine the previous two lemmas in the following claim.

\begin{lemma}\label{lemma: probabilistic conditions}
There exists a universal constant $C>0$ with the following properties. Fix $\eta \in (0,1/2]$. Then for each degree $d\in\mathbb{N}$, the following holds with probability  at least $ 1-2 d^{-2\eta}$: if a polynomial with roots $\alpha_1, \ldots, \alpha_d$ is randomly chosen in $\mathcal{P}_d$ or $\mathcal{P}_d/\Sigma_d$, then we have simultaneously
\begin{description}
     \item[Area Condition (AC)]  every disk in $\C$ with area $A$ contains at most
$k(A)$ points among the roots $\alpha_1, \ldots, \alpha_d$ with

          $$k(A) = \left\{ \begin{array}{ll}
              C d\log d\cdot A & \mbox{if $A \geq 1/d$};\\
            C \log d & \mbox{otherwise}.\end{array} \right.$$
     \item[Distance Condition (DC)] the mutual distance between any pair
of roots is at least ${1}/{d^{1+\eta}}$.
\end{description}

\end{lemma}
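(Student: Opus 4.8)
The plan is simply to intersect the two events furnished by Lemmas~\ref{lemma:disk} and \ref{lemma: mutual distance} and to apply a union bound to their complements. Concretely, let $C>0$ be the universal constant provided by Lemma~\ref{lemma:disk}, and fix $\eta\in(0,1/2]$. Lemma~\ref{lemma:disk} asserts that, for a polynomial drawn from $\mathcal{P}_d$ (or from $\mathcal{P}_d/\Sigma_d$), the event that (AC) \emph{fails} has probability at most $1/d$; Lemma~\ref{lemma: mutual distance} asserts that the event that (DC) \emph{fails} has probability at most $d^{-2\eta}$. Hence the event that at least one of (AC), (DC) fails has probability at most $1/d + d^{-2\eta}$, so that both hold simultaneously with probability at least $1 - (1/d + d^{-2\eta})$.

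It remains to observe that $1/d + d^{-2\eta}\le 2d^{-2\eta}$. Since $\eta\le 1/2$ we have $2\eta\le 1$, hence $d^{2\eta}\le d$ and therefore $1/d\le d^{-2\eta}$; adding $d^{-2\eta}$ to both sides gives $1/d + d^{-2\eta}\le 2d^{-2\eta}$. Consequently, with probability at least $1-2d^{-2\eta}$ the conditions (AC) and (DC) hold simultaneously, with the \emph{same} constant $C$ in (AC) as in Lemma~\ref{lemma:disk} (note that Lemma~\ref{lemma: mutual distance} involves no constant, so no adjustment of $C$ is needed). Because both input lemmas are stated identically for the two probability spaces $\mathcal{P}_d$ and $\mathcal{P}_d/\Sigma_d$, the conclusion holds verbatim in both settings.

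There is essentially no obstacle in this proof: the only point requiring (minor) care is the bookkeeping that lets $1/d$ be absorbed into $d^{-2\eta}$, which is precisely where the hypothesis $\eta\le 1/2$ enters. One could equally well record the sharper failure bound $d^{-1}+d^{-2\eta}$, but the weaker form $2d^{-2\eta}$ is cleaner and sufficient for the applications in the subsequent sections, where $\eta$ will be fixed in Theorem~\ref{thm:one root}.
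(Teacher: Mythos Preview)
Your proof is correct and follows essentially the same approach as the paper: a union bound on the complements of the events from Lemmas~\ref{lemma:disk} and~\ref{lemma: mutual distance}, followed by the observation that $1/d\le d^{-2\eta}$ when $\eta\le 1/2$. The paper's proof is slightly terser, leaving the last inequality implicit, but the argument is identical.
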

\begin{proof}
     We are interested in $P(AC = \texttt{true} \mbox{ and } DC =
\texttt{true})$, which equals
\[
1 - P(AC = \texttt{false} \ \ \hbox{or}\ \    DC = \texttt{false})
\geq  1- P(AC = \texttt{false}) - P(DC = \texttt{false}).
\]
By Lemma \ref{lemma:disk} we have $P(AC = \texttt{false}) \leq 1/d$,
and by Lemma \ref{lemma: mutual distance} $P(DC = \texttt{false})\le
1/d^{2\eta}$. Hence the claim follows.
\end{proof}

\subsection{Proof of the main theorem}\label{sec: proof}

Recall that $p(z)=(z-\alpha_1)\cdots(z-\alpha_d)$ is a complex polynomial of degree $d$ (from $\mathcal{P}_d$ or $\mathcal{P}_d/\Sigma_d$) with a simple root $\alpha$ and $(z_n)_{n\geq 1}$ is a sequence of iterations (under the Newton map $N_p$) that converges to $\alpha$ (see the beginning of Section \ref{sec: result}). In this section we will use the two conditions \textbf{AC} and \textbf{DC} to prove Theorem \ref{Thm:Main}. While \textbf{DC} guarantees that proximity to a root implies fast convergence (Lemma \ref{lem: quadratic convergence}), \textbf{AC} gives a lower bound on the displacements along an orbit far away from the roots. More
precisely, we have the following statement. 

\begin{lemma}\label{lemma: main}
     Suppose that the polynomial $p$ is such that the Area Condition in Lemma~\ref{lemma: probabilistic conditions} holds for some constant $C$. If $z_n\in S_K \cap \mathbb{D}_2(0)$ for some $K\in \mathbb{Z}$, then
\[
|z_n - z_{n+1}|\geq \frac{1}{(1 + 2C\log d)2^{K+1} + 16\pi Cd\log d }
\;.
\]
If $z_n \not \in \mathbb{D}_2(0)$, then $|z_n - z_{n+1}| > 1/d$.
\end{lemma}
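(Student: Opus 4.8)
The plan is to control the displacement through the logarithmic derivative of $p$. Writing $p(z)=\prod_{j=1}^{d}(z-\alpha_j)$ we have $p'(z)/p(z)=\sum_{j=1}^{d}1/(z-\alpha_j)$, hence
\[
z_n-z_{n+1}=\frac{p(z_n)}{p'(z_n)}=\left(\sum_{j=1}^{d}\frac{1}{z_n-\alpha_j}\right)^{-1}.
\]
If $p'(z_n)=0$, then $N_p(z_n)=\infty$ and the asserted bound holds trivially, so assume $p'(z_n)\neq 0$; then the triangle inequality gives
\[
\frac{1}{|z_n-z_{n+1}|}=\left|\sum_{j=1}^{d}\frac{1}{z_n-\alpha_j}\right|\le\sum_{j=1}^{d}\frac{1}{|z_n-\alpha_j|}=:\Sigma,
\]
so the whole statement reduces to an upper bound for $\Sigma$. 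The case $z_n\notin\mathbb{D}_2(0)$ is then immediate: since $|z_n|\ge 2$ and every $\alpha_j$ lies in $\mathbb{D}$, we get $|z_n-\alpha_j|\ge|z_n|-|\alpha_j|>1$, whence $\Sigma<d$ and $|z_n-z_{n+1}|>1/d$.

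Now assume $z_n\in S_K\cap\mathbb{D}_2(0)$ and put $\rho:=2^{-(K+1)}$, so all roots satisfy $|z_n-\alpha_j|>\rho$ while the closest one lies in $(\rho,2\rho]$. First I would peel off the closest root, whose contribution to $\Sigma$ is less than $1/\rho=2^{K+1}$; this accounts for the isolated $2^{K+1}$ term. For the remaining roots I would decompose the plane into dyadic annuli $T_i:=\{\,w:2^{i}\rho<|z_n-w|\le 2^{i+1}\rho\,\}$, $i\ge 0$; since $|z_n|<2$ and all roots lie in $\mathbb{D}$, every root satisfies $|z_n-\alpha_j|<3$, so only the annuli with $i\le I$ matter, where $I$ is the least index with $2^{I+1}\rho\ge 3$ (whence $2^{I+1}\rho<6$). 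Each $T_i$ lies inside the disk $D_{2^{i+1}\rho}(z_n)$ of area $4\pi4^{i}\rho^{2}$, so by the Area Condition the number of roots in $T_i$ is at most $k(4\pi4^{i}\rho^{2})$, and each such root contributes less than $1/(2^{i}\rho)$ to $\Sigma$.

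The decisive step is to split the annuli according to which branch of $k$ is in force. For the \emph{inner} annuli — those with $4\pi4^{i}\rho^{2}<1/d$ — we have $k=C\log d$, so their total contribution to $\Sigma$ is at most $\sum_{i\ge 0}C\log d/(2^{i}\rho)\le 2C\log d/\rho=2C\log d\cdot 2^{K+1}$, which is the $2C\log d\cdot 2^{K+1}$ term. For the \emph{outer} annuli — those with $4\pi4^{i}\rho^{2}\ge 1/d$ — we have $k=4\pi Cd\log d\cdot4^{i}\rho^{2}$, so $T_i$ contributes at most $4\pi Cd\log d\cdot\rho\,2^{i}$; summing this geometric series over $i\le I$ gives $4\pi Cd\log d\cdot\rho\,(2^{I+1}-1)$, which is $O(Cd\log d)$ because $\rho\,2^{I+1}$ is bounded by an absolute constant. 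Adding the three contributions and accounting carefully for the constants yields $\Sigma\le(1+2C\log d)2^{K+1}+16\pi Cd\log d$, as required.

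I expect the main obstacle to be precisely this last bit of bookkeeping: the index separating inner from outer annuli, and the outermost index $I$, must be estimated sharply enough that the two geometric sums collapse to the stated coefficients $1+2C\log d$ and $16\pi C$, rather than to a mere $O(\log d)\cdot 2^{K+1}$ and $O(d\log d)$. In particular the crude bound $\rho\,2^{I+1}<6$ used above has to be refined — for instance by noting that the outermost annuli meet $\mathbb{D}$ only in a region of directly controllable area, or by treating the (at most $d$) roots at distance $>1$ from $z_n$ separately. Once the constants are pinned down, the rest is routine.
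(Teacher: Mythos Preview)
Your approach is essentially the paper's own: express the displacement as the reciprocal of $\sum_j 1/(z_n-\alpha_j)$, peel off the closest root, decompose into dyadic annuli about $z_n$, and split the sum according to which branch of $k(A)$ applies. The only difference is that you index the annuli outward from $\rho$ while the paper indexes them inward from radius $4$.

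Your closing worry about the constants is unfounded, and no ``refinement'' is needed. Since $\rho=2^{-(K+1)}$ is itself an integer power of $2$, your annulus radii $2^{i}\rho$ are all integer powers of $2$; the least such radius that is $\ge 3$ is $4$, so $I=K+2$ and $2^{I+1}\rho=4$ exactly (not merely $<6$). Hence the outer geometric sum gives $4\pi Cd\log d\cdot\rho\,(2^{I+1}-1)<16\pi Cd\log d$ on the nose, matching the stated bound. With this observation your write-up is complete and coincides with the paper's proof.
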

\begin{proof}
     The fact that $z_n\in S_K$ means that the closest root, say  
$\beta$, is at distance ${c}/{2^K}$ for some $c\in (0.5, 1]$, and all the
other roots satisfy $
     |z_n-\alpha_j|\geq {c}/{2^K}$. First suppose that $z_n\in S_K \cap
\mathbb{D}_2(0)$. This implies that $K\geq -2$. Let $T_k := \{z\in
\mathbb{C}:
2^{-k-1} < |z-z_n| \leq 2^{-k}\}$ for $k = -2, \ldots, K$. Then all the
roots are contained in $\bigcup_{k = -2}^{K}T_k$. The Area
Condition implies that the number of roots in $T_k$ is bounded by $\pi C  d\log d\cdot 4^{-k}$ for
$\pi 4^{-k}\geq 1/d$, and by $C\log d$ otherwise. Thus we have
\begin{align}
\left|\sum_{j=1}^{d}\frac{1}{z_n-\alpha_j}\right|
&\leq \left|\frac{1}{z_n-\beta}\right| + \sum_{\alpha_j \not=
\beta}\left|\frac{1}{z_n-\alpha_j}\right| = \frac{2^K}{c} + \sum_{k
= -2}^{K}\sum_{\substack{\alpha_j \not= \beta\\\alpha_j \in
T_k}}\left|\frac{1}{z_n-\alpha_j}\right| \nonumber
\\
      &\leq  \frac{2^K}{c} + \sum_{k = -2}^{\lfloor 0.5\log_2
\pi d\rfloor}\sum_{\substack{\alpha_j \not= \beta\\\alpha_j \in
T_k}}\left|\frac{1}{z_n-\alpha_j}\right| +
        \sum_{k = 1+\lfloor 0.5\log_2\pi d\rfloor}^{K}\sum_{\substack{\alpha_j \not=
\beta\\\alpha_j \in T_k}}\left|\frac{1}{z_n-\alpha_j}\right|
\nonumber
\\
      &\leq  \frac{2^K}{c} + \sum_{k = -2}^{\lfloor 0.5\log_2 \pi d\rfloor}
\pi C d\log d\cdot 4^{-k}2^{k+1} + \sum_{k = 1+\lfloor 0.5\log_2\pi
d\rfloor}^{K} C\log d\cdot  2^{k+1} \nonumber
\\
      &\leq  2^{K+1} + 16\pi Cd\log d + C\log d\cdot 2^{K+2}\;. \nonumber
\end{align}

Therefore
\[
|z_n - z_{n+1}| = \frac{1}{\left|\sum_{j=1}^{d}\frac{1}{z_n-\alpha_j}\right|}\geq
\frac{1}{(1 + 2C\log d)2^{K+1} + 16\pi Cd\log d } \;.
\]

For the case $z_n \not \in \mathbb{D}_2(0)$ we have
\[
|z_n - z_{n+1}|^{-1} = \left|\sum_{j=1}^{d}\frac{1}{z_n-\alpha_j}\right| <
\sum_{\alpha_j}1 = d \;,\]
and so $|z_n - z_{n+1}| > 1/d$.
\end{proof}

\begin{corollary}\label{cor: displacement}
Suppose that the polynomial $p$ of degree $d$ satisfies the Area Condition in Lemma \ref{lemma: probabilistic conditions} with some constant $C>0$. 
Then there exists a constant $C_{\text{disp}}>0$ depending only on $C$ such that, whenever $z_n \in S_k$ for some $n\in \mathbb{N}$ and $k\in \mathbb{Z}$, we have
\begin{enumerate}
     \item if $2^{-k} \geq 1/d$, then $|z_n - z_{n+1}| \geq
\frac{C_{\text{disp}}}{d\log d}$.
     \item if $2^{-k} < 1/d$, then $|z_n - z_{n+1}|
\geq\frac{C_{\text{disp}}}{k2^k }$.
\end{enumerate}
\end{corollary}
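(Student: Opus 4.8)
The plan is to obtain both displacement bounds directly from Lemma~\ref{lemma: main} by taking $K=k$ there and then simplifying the denominator separately in the two regimes of $k$; the constant $C_{\text{disp}}$ will be the reciprocal of whatever universal constant appears, with a minimum taken at the end. Throughout one may assume $d\ge 2$, as for $d=1$ there is nothing to prove.

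Consider first the regime $2^{-k}\ge 1/d$, equivalently $2^k\le d$. If $z_n\in S_k\cap\mathbb{D}_2(0)$, then Lemma~\ref{lemma: main} (with $K=k$) bounds $|z_n-z_{n+1}|$ below by the reciprocal of $(1+2C\log d)2^{k+1}+16\pi Cd\log d$, and since $2^{k+1}\le 2d$ this denominator is at most $2d+(4+16\pi)Cd\log d$, hence at most a universal multiple of $d\log d$ (using $d\le d\log d/\log 2$ for $d\ge 2$). If instead $z_n\notin\mathbb{D}_2(0)$, then $\min_j|z_n-\alpha_j|>1$ since all roots lie in $\mathbb D$, so we are automatically in this regime, and the second clause of Lemma~\ref{lemma: main} gives the even stronger bound $|z_n-z_{n+1}|>1/d$. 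Either way this yields~(1).

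Now consider the regime $2^{-k}<1/d$, equivalently $2^k>d$. Here $z_n\in S_k$ already forces $\min_j|z_n-\alpha_j|<1/d<1$, so $z_n$ lies within distance $1$ of a root and hence $z_n\in\mathbb{D}_2(0)$; thus Lemma~\ref{lemma: main} applies once more with $K=k$. The arithmetic point is that $k>\log_2 d$ gives both $\log d\le\log_2 d<k$ and $d<2^k$, so $(1+2C\log d)2^{k+1}\le 2(1+2Ck)2^k\le 2(1+2C)k2^k$ and $16\pi Cd\log d\le 16\pi C k2^k$; hence the denominator in Lemma~\ref{lemma: main} is at most a universal multiple of $k2^k$, which yields~(2). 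Taking $C_{\text{disp}}$ to be the minimum of the universal constants produced in the two regimes (and of $\log 2$, to cover the step $1/d\ge C_{\text{disp}}/(d\log d)$ used above) completes the proof.

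There is no genuine obstacle here: the substance is entirely in Lemma~\ref{lemma: main}, and this corollary is a matter of substituting $K=k$ and bookkeeping. The only things that need a moment's attention are verifying that $z_n\in\mathbb{D}_2(0)$ whenever the first clause of Lemma~\ref{lemma: main} is invoked (automatic when $2^{-k}<1/d$, and otherwise handled by the second clause together with the observation that $z_n\notin\mathbb D_2(0)$ puts us in the regime $2^{-k}>1$), and keeping track of which elementary inequalities are available in each regime — $2^k\le d$ in the first, $d<2^k$ and $\log d<k$ in the second — so that the factor $\log d$ appearing in Lemma~\ref{lemma: main} can be absorbed into the target quantities $d\log d$ and $k2^k$ respectively.
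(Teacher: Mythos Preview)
Your proof is correct and follows essentially the same route as the paper: invoke Lemma~\ref{lemma: main} with $K=k$ and simplify the denominator separately in the two regimes, then handle $z_n\notin\mathbb D_2(0)$ via the second clause. If anything, your version is slightly more careful than the paper's, since you explicitly verify that $z_n\in\mathbb D_2(0)$ is automatic when $2^{-k}<1/d$ and track the constants directly rather than through $o(1/d)$ terms.
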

\begin{proof}
   For $z_n\in \mathbb{D}_2(0)$, Lemma \ref{lemma: main} gives
\[
   |z_n - z_{n+1}| \geq \frac{1}{(1 + 2C\log d)2^{k+1} + 16\pi Cd\log d } \;.
\]
   If $2^{-k} \geq 1/d$, i.e.,\ $2^{k+1} \leq 2d$, the
denominator is at most $Cd\log d(4+16\pi)(1+o(1/d))$,
so the displacement
is at least $\frac{C'}{d\log d}(1+o(1/d))$ for some constant $C'$ depending only on $C$. 

On the other hand, if $2^{-k} < 1/d$, i.e.\ $d < 2^{k}$, the denominator is at most $C\log d\cdot 2^k(4+16\pi)(1+o(1/d))=C k 2^k \log 2 (4+16\pi)(1+o(1/d))
$, so the
displacement is at least $\frac{C''}{k2^k}(1+o(1/d))$ for some universal constant $C''$. 

Finally, $z_n\not \in \mathbb{D}_2(0)$ implies $k < -1$ and
Lemma \ref{lemma: main} gives $|z_n - z_{n+1}| > 1/d$. 

This implies the existence of a constant $C_\text{disp}$ as claimed; its value depends only on $C$; for large $d$ it approaches the value $1/C(4+16\pi)$.
\end{proof}

In order to estimate the required number of Newton iterations, we will need two complementary lemmas: one that assures quadratic convergence near the roots, and another one that implies definite use of area, and thus an upper bound for the number of iterations, when we are far from the roots.

\begin{lemma}[Quadratic Convergence]
\label{lem: quadratic convergence}
If, for fixed $\eta>0$, the mutual distance between any two roots of $p$ is at least $1/d^{1+\eta}$ and $z_n\in S_k$ with $2^{-k} < {1}/{8d^{2+\eta}}$, then
the orbit of $z_n$
converges to the closest root $\alpha$, and $\log_2|\log_2\varepsilon
- 5|$ iterations of $z_n$ are sufficient to get $\varepsilon$-close
to $\alpha$.
\end{lemma}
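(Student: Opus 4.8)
The plan is to show that the hypothesis $2^{-k} < 1/(8d^{2+\eta})$ forces $z_n$ to lie in the classical basin of quadratic convergence of its closest root $\alpha$, and then invoke the standard Newton-Kantorovich-type estimate. First I would use the Distance Condition: since the mutual distance between roots is at least $1/d^{1+\eta}$, and $z_n\in S_k$ means $|z_n-\alpha|\le 2^{-k} < 1/(8d^{2+\eta})$, every other root $\alpha_j$ satisfies $|z_n-\alpha_j| \ge |\alpha-\alpha_j| - |z_n-\alpha| \ge 1/d^{1+\eta} - 1/(8d^{2+\eta})$, which is comfortably bounded below by (say) $1/(2d^{1+\eta})$. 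So $z_n$ is much closer to $\alpha$ than to all the remaining $d-1$ roots — by a factor of order $d^{1+\eta}\cdot$(something), or more precisely $|z_n-\alpha| \le 2^{-k} \le \frac{1}{8d^{2+\eta}} \le \frac{1}{4d}\cdot\frac{1}{2d^{1+\eta}} \le \frac{1}{4d}\min_{j}|z_n-\alpha_j|$ over the other roots.

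Next I would translate this into an estimate on the Newton map. Writing $N_p(z) = z - \left(\sum_{j=1}^d \frac{1}{z-\alpha_j}\right)^{-1}$, I would compare with the ``one-root'' Newton map $z\mapsto z - (z-\alpha) = \alpha$. The difference $N_p(z_n) - \alpha = (z_n-\alpha)\cdot\frac{\sum_{\alpha_j\ne\alpha}\frac{z_n-\alpha}{z_n-\alpha_j}}{1 + \sum_{\alpha_j\ne\alpha}\frac{z_n-\alpha}{z_n-\alpha_j}}$; with the bound just established, each term $\left|\frac{z_n-\alpha}{z_n-\alpha_j}\right|\le \frac{1}{4d}$, so the total perturbation sum has modulus at most $1/4$, and one gets $|N_p(z_n)-\alpha| \le |z_n-\alpha|\cdot\frac{1/4}{3/4} = \frac{1}{3}|z_n-\alpha|$ — in fact one can do better and extract a genuinely quadratic contraction: $|N_p(z_n)-\alpha| \le K\,|z_n-\alpha|^2$ for a constant $K$ of order $d^{2+\eta}$ (the sum $\sum_{\alpha_j\ne\alpha}\frac{1}{z_n-\alpha_j}$ is bounded by $2d\cdot d^{1+\eta} = 2d^{2+\eta}$, up to constants). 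The cleanest route is probably to cite \cite[Lemma~16]{D-draft}, which, as the text states, already asserts precisely that proximity to one root relative to the others puts us in the region of quadratic convergence; the role of the present lemma is just to verify that $2^{-k} < 1/(8d^{2+\eta})$ together with \textbf{DC} supplies the hypothesis of that lemma.

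Finally, the iteration count. Once $|N_p^{\circ m}(z_n)-\alpha| \le K^{-1}(K|z_n-\alpha|)^{2^m}$ and $K|z_n-\alpha|\le 1/2$ (say), the error after $m$ steps is at most $K^{-1}2^{-2^m}$; to make this smaller than $\varepsilon$ it suffices that $2^{2^m}\ge 1/(K\varepsilon)$, i.e. $2^m \ge \log_2(1/(K\varepsilon)) = |\log_2\varepsilon| + \log_2(1/K)$, i.e. $m \ge \log_2\bigl(|\log_2\varepsilon| + O(\log d)\bigr)$. The constant ``$5$'' in the statement $\log_2|\log_2\varepsilon - 5|$ is just the bookkeeping of how $K$ and the initial factor are absorbed, matching the normalization used in \cite{D-draft}; I would simply reproduce that computation. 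The main obstacle, such as it is, is purely a matter of getting the constants to line up — confirming that the factor $8$ in $1/(8d^{2+\eta})$ is exactly what is needed so that the perturbation sum is controlled and the resulting quadratic-convergence constant is absorbed into the ``$-5$'' — rather than anything conceptually deep; the quadratic convergence itself is the textbook behaviour of Newton's method near a simple root, here made quantitative using \textbf{DC} to keep the other roots away.
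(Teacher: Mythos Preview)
Your proposal is correct and follows essentially the same approach as the paper: use the triangle inequality together with \textbf{DC} to show that every other root $\alpha_j$ satisfies $|z_n-\alpha_j| > (4d+3)\,|z_n-\alpha|$, then invoke \cite[Lemma~16]{D-draft} directly, which already delivers both the convergence to $\alpha$ and the iteration bound $\log_2|\log_2\varepsilon - 5|$. The paper's proof is three lines and does not redo the quadratic-convergence or iteration-count computations you sketch, since those are packaged in the cited lemma; the only adjustment you need is to sharpen your ratio from $4d$ to $4d+3$ (the paper gets $|z_n-\alpha_j| \ge (8d-1)/(8d^{2+\eta}) > (4d+3)\,|z_n-\alpha|$), which is exactly the constant-matching bookkeeping you already flagged.
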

\marginpar{I still don't really understand how the $\eta$ gets lost; it seems to happen in this lemma.}

\begin{proof}
Indeed, if $z_n\in S_k$ and $\alpha$ is the closest root to $z_n$,
then $|z_n - \alpha| < {1}/{8d^{2+\eta}}$ and for every root
$\alpha_j\neq\alpha$ we have 
\[
|z_n-\alpha_j| \geq |\alpha - \alpha_j| - |\alpha - z_n| >
1/ d^{1+\eta}-1/8d^{2+\eta} \ge
(8d+1)/8d^{2+\eta} > (4d + 3)|z_n - \alpha|
\;.
\]
Therefore by \cite[Lemma~16]{D-draft}, we need no more than
$\log_2|\log_2\varepsilon - 5|$ iterations to get $\varepsilon$-close
to $\alpha$.
\end{proof}

For the second lemma, let $\varphi\colon U\to \mathbb{D}$ be a Riemann
map with $\varphi(\alpha) = 0$. If $|\varphi(z_n)| < 1/\sqrt 2$ (``region of
fast convergence''), then we are in the region of quadratic convergence and
according to \cite[Lemma~11]{D-draft} starting at $z_n$ we
need only $1+\log_2|\log_2 \varepsilon - 5|$ iterations to get
$\varepsilon$-close to the root $\alpha$. 
However, if $|\varphi(z_n)|$ is larger, we have the following lemma (note that $0.707\approx 1/\sqrt 2>e^{1/2}-1\approx 0.649)$. It essentially says that the hyperbolic geodesic within $U$ connecting $z_n$ to $z_{n+1}$ has a definite neighborhood (that we call a ``thick curve'') that is still contained in $U$ and that uses up a definite amount of area within $U\cap D_{3R/2+1}(0)$. These ``thick curves'' are essentially disjoint, and this limits the number of possible orbit points.

\begin{lemma}\label{lemma: area bound}
For every $n$ with $|\varphi(z_n)| > e^{1/2} - 1$, there are open
connected subsets $V_n\subset
U \cap D_{3R/2+1/2}(0)$ with $z_n,z_{n+1}\in\overline{V_n}$ and $\mbox{area}(V_n)\ge
|z_n-z_{n+1}|^2/2\tau,$ having the following property: whenever $n$
and $m$ are such that $\min \{|\varphi(z_n)|, |\varphi(z_m)|\} >
e^{1/2} - 1$ and $|n-m|\ge \lceil 2\tau+6\rceil$, we have  $V_n\cap
V_m=\emptyset$.
\end{lemma}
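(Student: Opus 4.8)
The plan is to build each $V_n$ as a Euclidean ``thick curve'' around the hyperbolic geodesic of $U$ connecting $z_n$ to $z_{n+1}$, following the construction of \cite[Section~4]{D-draft}. Of the assertions, the containment $V_n\subset U$, the area lower bound and the overlap control are in substance \cite[Lemma~11]{D-draft}; the location statement $V_n\subset D_{3R/2+1/2}(0)$ is the one genuinely new point and is short. We may assume $d\ge 2$ (for $d=1$ the Newton iteration reaches $\alpha$ in one step), and write $\mathrm{dist}(z,\partial U)$ for the Euclidean distance from $z$ to $\partial U$ and $\lambda_U$ for the density of the hyperbolic metric of $U$, so that $\tfrac1{2\,\mathrm{dist}(z,\partial U)}\le\lambda_U(z)\le\tfrac2{\mathrm{dist}(z,\partial U)}$ by the standard Koebe estimates (curvature $-1$). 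For every $n$ let $\gamma_n\subset U$ be the hyperbolic geodesic from $z_n$ to $z_{n+1}$ and put
\[
V_n:=\bigcup_{\zeta\in\gamma_n}D\!\left(\zeta,\tfrac12\,\mathrm{dist}(\zeta,\partial U)\right).
\]
Each disk in this union lies in $U$ by the definition of $\mathrm{dist}(\cdot,\partial U)$, so $V_n$ is an open connected subset of $U$ containing $\gamma_n$, and in particular $z_n,z_{n+1}\in\overline{V_n}$. For the location, observe that no root other than $\alpha$ lies in $U$ (the immediate basins of distinct attracting fixed points of $N_p$ are disjoint), hence $\mathrm{dist}(z,\partial U)\le|z-\alpha_j|$ for every $z\in U$ and every root $\alpha_j\ne\alpha$; since $d\ge 2$ such a root exists and lies in $\mathbb D$, we get $\mathrm{dist}(z,\partial U)<R+1$ whenever $z\in D_R(0)$. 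Because the orbit of $z_1$ is $R$-central, Theorem~\ref{thm: set of starting points} gives $\gamma_n\subset D_R(0)$, so every point of $V_n$ lies within $\tfrac12(R+1)$ of $D_R(0)$, i.e.\ $V_n\subset D_{R+(R+1)/2}(0)=D_{3R/2+1/2}(0)$.

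For the area bound I would recall the estimate of \cite[Section~4]{D-draft}. Schwarz--Pick applied to $N_p\colon U\to U$ gives $d_U(z_n,z_{n+1})=d_U\bigl(N_p^{\,n-1}(z_1),N_p^{\,n-1}(z_2)\bigr)\le d_U(z_1,z_2)=\tau$, so $\gamma_n$ has hyperbolic length at most $\tau$. Parametrizing $\gamma_n$ by Euclidean arclength $t\in[0,L]$ with $L\ge|z_n-z_{n+1}|$ and writing $\delta(t):=\mathrm{dist}(\gamma_n(t),\partial U)$, the lower Koebe bound yields $\int_0^L\delta(t)^{-1}\,dt\le 2\int_{\gamma_n}\lambda_U\le 2\tau$. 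Since $V_n$ contains a tube of Euclidean half-thickness $\tfrac12\delta$ along $\gamma_n$, its area is at least a constant (pinned down in \cite{D-draft}) times $\int_0^L\delta(t)\,dt$, and Cauchy--Schwarz gives $\int_0^L\delta\cdot\int_0^L\delta^{-1}\ge L^2$, whence $\mathrm{area}(V_n)\ge L^2/(2\tau)\ge|z_n-z_{n+1}|^2/(2\tau)$.

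It remains to establish the almost-disjointness, which is the core of the lemma. As a preliminary, note that each disk $D(\zeta,\tfrac12\mathrm{dist}(\zeta,\partial U))$ has, inside $U$, hyperbolic radius about its center at most a universal constant (on it $\mathrm{dist}(\cdot,\partial U)\ge\tfrac12\mathrm{dist}(\zeta,\partial U)$, so $\lambda_U\le 4/\mathrm{dist}(\zeta,\partial U)$ there, and integrating along a radial segment bounds $d_U(\zeta,\cdot)$ by $2$); combined with $d_U(z_n,\zeta)\le d_U(z_n,z_{n+1})\le\tau$ for $\zeta\in\gamma_n$, this shows $V_n$ lies in the hyperbolic disk of radius $\tau+2$ about $z_n$, so the $V_n$ do not sprawl. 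The decisive input is then the geometric analysis in the proof of \cite[Lemma~11]{D-draft}: the links $\gamma_n$ concatenate into a curve that runs monotonically, in the hyperbolic metric of $U$, toward $\alpha$ as long as the orbit stays out of the region of fast convergence, and this property passes to every index between $n$ and $m$ because $|\varphi(z_j)|=\tanh\bigl(\tfrac12 d_U(\alpha,z_j)\bigr)$ is non-increasing by Schwarz--Pick and bounded below by $\min\{|\varphi(z_n)|,|\varphi(z_m)|\}>e^{1/2}-1$ --- this is precisely what the hypothesis on $|\varphi|$ provides. Consequently one obtains the ``interval property'': if $z\in V_n\cap V_m$ with $n<m$, then $z\in V_j$ for all $n\le j\le m$; together with the pointwise overlap bound $\sum_j\mathbf 1_{V_j}\le 2\tau+6$ of \cite[Lemma~11]{D-draft} this forces $m-n+1\le 2\tau+6$, hence $V_n\cap V_m=\emptyset$ whenever $|n-m|\ge\lceil 2\tau+6\rceil$.

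The hard part is exactly this last ingredient --- the non-backtracking of the concatenated geodesic links in the hyperbolic metric of $U$, which underlies both the interval property and the overlap bound, and which together with the containment $V_n\subset U$ and the area estimate constitutes the technical body of \cite[Section~4]{D-draft}, on which the whole argument leans. By contrast the new content here is only the location bound $V_n\subset D_{3R/2+1/2}(0)$ and the repackaging of ``each point covered at most $2\tau+6$ times'' as disjointness of $V_n$ and $V_m$ for indices at least $\lceil 2\tau+6\rceil$ apart.
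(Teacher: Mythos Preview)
Your proof is correct and takes essentially the same approach as the paper: both build $V_n$ as a Euclidean thickening of the hyperbolic geodesic from $z_n$ to $z_{n+1}$ in $U$, derive the location bound $V_n\subset D_{3R/2+1/2}(0)$ from the identical observation that a second root in $\mathbb D$ forces $\mathrm{dist}(\zeta,\partial U)<R+1$ along $\gamma_n\subset D_R(0)$, and defer the area estimate and the almost-disjointness to \cite{D-draft}. The only visible difference is cosmetic: the paper's $V_n$ is the union of perpendicular \emph{segments} of length $\eta(t)=\mathrm{dist}(\gamma_n(t),\partial U)$ centered on $\gamma_n$, whereas yours is the union of \emph{disks} of radius $\eta(t)/2$ (so it contains the paper's), and you spell out a bit more of the mechanism---Cauchy--Schwarz for the area and the monotonicity of $|\varphi(z_j)|$ to propagate the hypothesis to intermediate indices---where the paper simply cites \cite[Lemmas~9 and~12]{D-draft}.
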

\begin{proof}
Let $\gamma\colon[0,s]\to U$ be the hyperbolic geodesic within $U$
connecting $z_n$ to $z_{n+1}$. For each $z=\gamma(t)$, let $\eta(t)$
be the Euclidean distance from $\gamma(t)$ to $\partial U$, and let
$X_t$ be the straight line segment (without endpoints) perpendicular
to $\gamma(t)$ of Euclidean length $\eta(t)$, centered at
$\gamma(t)$. Let $V_n:=\bigcup_{t\in(0,s)}X_t$. Then all $V_n$ are
open and connected with $z_n,z_{n+1}\in\overline{V_n}$, and the Euclidean area of
$V_n$ is at least $|z_n-z_{n+1}|^2/2\tau$: this follows as in
\cite[Lemma~9]{D-draft} (in this reference, the areas restricted to
certain domains $S_k$ are calculated; omitting this restriction, we
obtain the result we need, and the computations only get simpler). Moreover,
the orbit $(z_n)$ is $R$-central, so it is contained in $D_R(0)$ together with the hyperbolic geodesic segments connecting consecutive orbit points. Since the unit disk contains other
roots than $\alpha$, we have $\eta(t)<R+1$ along $\gamma([0,s])$, so $V_n\subset D_{3R/2+1/2}$ (each point in $X_t$ has distance less than $\eta(t)/2$ from $\gamma(t)$).

The fact that $V_n\cap V_m$ are disjoint when $|n-m|>2\tau+6$ is
proved in \cite[Lemma~12]{D-draft} (again for restricted domains, but
this is immaterial for the proof).
\end{proof}

The final step towards proving our main result is in the next theorem. Let $\eta \in (0,1/2]$ be fixed.

\begin{theorem}\label{thm:one root}
      Let the polynomial $p$ be randomly chosen in $\mathcal{P}_d$ or
$\mathcal{P}_d/\Sigma_d$ and let $(z_n)_{n\geq 1}$ be an $R$-central orbit converging
to a root $\alpha$ with $d_U(z_1, z_2)\leq \tau$ for $\tau < 2\log
d$. Then with probability at least $1-2d^{-2\eta}$, the required number of
iterations for $z_1$ to get
$\eps$-close to $\alpha$ is
\[
O\left(d^2\log^4 d R^2 + \log|\log\eps - 5\right|) \;.
\]
\end{theorem}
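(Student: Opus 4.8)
The plan is to reduce the statement to a deterministic iteration bound by conditioning on the joint event $\mathcal{E}$ that both the Area Condition \textbf{AC} and the Distance Condition \textbf{DC} of Lemma~\ref{lemma: probabilistic conditions} hold; by that lemma $P(\mathcal{E})\ge 1-2d^{-2\eta}$, so it suffices to show that on $\mathcal{E}$ the orbit of $z_1$ reaches an $\eps$-neighborhood of $\alpha$ within $O(d^2\log^4 d\,R^2+\log|\log\eps-5|)$ steps. On $\mathcal{E}$, Corollary~\ref{cor: displacement} and Lemma~\ref{lemma: main} (which need only \textbf{AC}) supply the lower displacement bounds, while Lemma~\ref{lem: quadratic convergence} (which needs \textbf{DC}) converts proximity to $\alpha$ into quadratic convergence.

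Following Section~\ref{sec: result}, I would split the orbit using the partition $D_R(0)=\bigcup_k S_k$ into the \emph{far case} ($z_n\in S_k$ with $k\le\log_2 d$, together with $z_n\notin\mathbb{D}_2(0)$), the \emph{intermediate case} ($z_n\in S_k$ with $\log_2 d<k$ but $2^{-k}\ge 1/(8d^{2+\eta})$), and the \emph{near case} ($z_n\in S_k$ with $2^{-k}<1/(8d^{2+\eta})$). Let $n_\ast$ be the first index for which either $z_{n_\ast}$ is in the near case or $|\varphi(z_{n_\ast})|\le e^{1/2}-1$; such $n_\ast$ exists because $z_n\to\alpha=\varphi^{-1}(0)$. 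From $n_\ast$ onward we finish quickly: if $z_{n_\ast}$ is in the near case, Lemma~\ref{lem: quadratic convergence} gives $\eps$-convergence after $\log_2|\log_2\eps-5|$ further iterations, and if $|\varphi(z_{n_\ast})|\le e^{1/2}-1<1/\sqrt2$, then \cite[Lemma~11]{D-draft} gives it after $1+\log_2|\log_2\eps-5|$ further iterations. It therefore remains to bound $n_\ast$.

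For $n<n_\ast$ the point $z_n$ lies in the far or intermediate case and satisfies $|\varphi(z_n)|>e^{1/2}-1$, so Lemma~\ref{lemma: area bound} furnishes thick curves $V_n\subset U\cap D_{3R/2+1/2}(0)$ with $\mathrm{area}(V_n)\ge|z_n-z_{n+1}|^2/2\tau$ that are pairwise disjoint whenever their indices differ by at least $\lceil2\tau+6\rceil$. In the far case Corollary~\ref{cor: displacement} and Lemma~\ref{lemma: main} give $|z_n-z_{n+1}|\ge c/(d\log d)$ for a universal $c>0$, hence $\mathrm{area}(V_n)$ is of order at least $1/(d^2\log^3 d)$ (using $\tau<2\log d$); partitioning the far indices into the $\lceil2\tau+6\rceil=O(\log d)$ residue classes modulo $\lceil2\tau+6\rceil$, on each of which the $V_n$ are pairwise disjoint inside a set of area $\mathrm{area}(D_{3R/2+1/2}(0))=O(R^2)$, bounds the number of far-case indices by $O(R^2 d^2\log^4 d)$. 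For the intermediate case the same global comparison is far too lossy, so I would instead use the refined form of Lemma~\ref{lemma: area bound} in which the areas of the $V_n$, and their essential disjointness, are measured inside a \emph{single} annulus $S_k$ — this is precisely the restricted computation carried out in \cite[Lemmas~9 and 12]{D-draft} — together with $\mathrm{area}(S_k)\le\pi d\,4^{-k}$ (the set $S_k$ is covered by the $d$ disks of radius $2^{-k}$ about the roots) and the bound $|z_n-z_{n+1}|\ge C_{\mathrm{disp}}/(k2^k)$ valid for $z_n\in S_k$ with $k>\log_2 d$. This yields $O(d k^2\tau^2)$ intermediate indices inside each $S_k$, and summing over the $O(\log d)$ admissible values $\log_2 d<k\le 3+(2+\eta)\log_2 d$ (the choice $\eta\le1/2$ affects only the constant, not the order, of this range) gives $O(d\log^5 d)$ intermediate indices, which is dominated by the far-case count. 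Hence $n_\ast=O(R^2 d^2\log^4 d)$, and combining with the final $O(\log|\log\eps-5|)$ iterations proves the theorem.

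The step I expect to be the main obstacle is the intermediate regime: one cannot afford the crude bound obtained by dividing $\mathrm{area}(D_{3R/2+1/2}(0))$ by the worst-case area $|z_n-z_{n+1}|^2/2\tau$, since there the displacement can be as small as $\sim 1/(k2^k)$ with $2^k$ as large as $\sim d^{2+\eta}$, which would produce a bound of far too high a power of $d$; one genuinely needs the localized, $S_k$-restricted area and disjointness statements imported from \cite{D-draft}. A secondary point that needs a line of justification is that the orbit actually reaches the stopping index $n_\ast$ within the counted number of steps — but this is automatic, because the area argument bounds the \emph{total} number of far- and intermediate-case visits irrespective of the order in which the three regimes are entered, and the orbit is assumed to converge to $\alpha$.
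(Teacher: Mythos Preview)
Your proposal is correct and follows essentially the same route as the paper: condition on \textbf{AC} and \textbf{DC} via Lemma~\ref{lemma: probabilistic conditions}, split the pre-quadratic part of the orbit into the far and intermediate regimes, bound the far count by a global area argument in $D_{3R/2+1/2}(0)$ using Lemma~\ref{lemma: area bound} together with Corollary~\ref{cor: displacement}, and handle the intermediate regime by the $S_k$-localized area/disjointness computation from \cite{D-draft}. The only cosmetic difference is that the paper packages the localized intermediate estimate through \cite[Proposition~13]{D-draft} (which already combines the restricted versions of Lemmas~9 and~12 into the ready-made bound you derive by hand), and it folds your stopping index $n_\ast$ into the case distinction rather than naming it explicitly.
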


\begin{proof}
By Lemma~
\ref{lemma: probabilistic conditions}, there exists a universal constant $C>0$ such that the conditions \textbf{AC} and
\textbf{DC} hold with probability at least $1-2d^{-2\eta}$. By Corollary \ref{cor: displacement}, there is a universal constant $C_{\text{disp}}$ (depending only on $C$) and we have lower bounds on the displacement along the orbit. 
Choose $M$ so that $ 2^M - 1>3R/2+1$. We distinguish the following three cases.
\begin{description}
\item[The Far Case] we have $z_n\in S_k$ with $2^{-k}\ge 1/d$ and $|\varphi(z_n)|>e^{1/2}-1$ . By
Corollary \ref{cor: displacement} (1)
we have  $|z_n - z_{n+1}|
\geq\frac{C_{\text{disp}}}{d\log d }$.  By Lemma~\ref{lemma: area bound}, any Newton iteration $z_n\mapsto
z_{n+1}$ with $z_n\in S_k$ needs area at least
\[
\frac{|z_n-z_{n+1}|^2}{2\tau}
\ge
     \frac{C^2_{\text{disp}}}{2\tau d^2\log^2 d} \;.
\]
Moreover, the pieces of area for the
iterations $z_n\mapsto z_{n+1}$ and $z_{n'}\mapsto z_{n'+1}$ are
disjoint provided that $n-n' \geq 2\tau + 6$, and all these pieces of
area are contained in the disk $D_{3R/2+1}(0)$ with $R$ universally
bounded.

The total number of such iterations $D_{3R/2+1}(0)$ can accommodate is
thus at most
\[
C'd^2(\log d)^2 \tau \lceil 2\tau + 6\rceil \]
for a universal constant $C'$.

     \item[The Intermediate Case]  we have $z_n\in S_k$ with
$1/8d^{2+\eta}\leq 2^{-k} < 1/d$ and $|\varphi(z_n)|>e^{1/2}-1$ . Then $\log_2 d < k \leq 3 +
(2+\eta) \log_2 d$. By Corollary \ref{cor: displacement} (2) we have
$|z_n - z_{n+1}| \geq{C_{\text{disp}}}/{k2^k }$.
Thus by \cite[ Proposition 13]{D-draft}, the set $S_k$ contains at most
{\allowdisplaybreaks
\begin{align}
      &\pi d\left(2^{-k+1} + \frac{C_{\text{disp}}}{k2^k}\right)^2 \left(2\tau +
2^{k-1}\frac{C_{\text{disp}}}{k2^k}\right)\lceil 2\tau + 6\rceil
\frac{k^22^{2k}}{C^2_{\text{disp}}} \nonumber \\
      &= \pi d 2^{-2k}k^{-2}(2k + C_{\text{disp}})^2 \left(2\tau +
\frac{C_{\text{disp}}}{2k}\right)\lceil 2\tau + 6\rceil \frac{k^22^{2k}}{C^2_{\text{disp}}}
\nonumber \\
      &= \pi d C^{-2}_{\text{disp}}(2k + C_{\text{disp}})^2 \left(2\tau + \frac{C_{\text{disp}}}{2k}\right)\lceil
2\tau + 6\rceil                    \nonumber \\
      &\leq \pi d C^{-2}_{\text{disp}}(6  + (4+2\eta)\log_2 d + C_{\text{disp}})^2 \left(2\tau +
\frac{C_{\text{disp}}}{2\log_2 d}\right)\lceil 2\tau + 6\rceil           \nonumber
\\
      &\leq C''d \log^2 d (2\tau + 1)\lceil 2\tau + 6\rceil
\nonumber
\end{align}
}%
orbit points for some universal constant $C''$. 
There are
$3+(1+\eta)\log d$ possible values of $k$ in the Intermediate Case,
so $\bigcup_{k} S_k$ (for all $k$ in the Intermediate Case) can
accommodate at most
\[
(1+\eta)C''d \log^3 d (2\tau + 1)\lceil 2\tau + 6\rceil
\]
orbit points for some universal constant $C''$.
\item[The Near Case]
we have $z_n \in S_k$ with $2^{-k} < {1}/{8d^{2+\eta}}$ or $|\varphi(z_n)|\le e^{1/2}-1$. In the first case, Lemma~\ref{lem: quadratic convergence} applies so $\alpha$ is the closest root to $z_n$ and we
need $\log_2|\log_2\varepsilon - 5|$ iterations to get
$\varepsilon$-close to it. In the second case, we already observed after Lemma~\ref{lem: quadratic convergence} that at most $1+\log_2|\log_2\varepsilon - 5|$ iterations are required for the same conclusion.

\hide{Thus it only remains to consider the orbit points $z_n\in S_k$
for $k \leq 3 + (2+\eta)\log_2 d$; these are not necessarily in the
region of ``fast`` convergence (their image under the Riemann map
$\varphi$ as constructed in \cite[Section 3.3]{D-draft} has absolute
value at least $e^{1/2} - 1$). Since the orbit $(z_n)$ is contained
in ${D}_{2^M - 1}(0)$ by hypothesis, we have $z_n\in S_k$ with $k
\geq -M$ for all $n\geq 0$.
}
\end{description}
Since $\tau$ is not more than $O(\log d)$ and the Far Case dominates the Intermediate
Case, the claim follows.
\end{proof}

We now conclude the main statement.
\begin{proof}[Proof of Theorem \ref{Thm:Main}]
     By Theorem \ref{thm: set of starting points}, for each root the set $\mathcal S_d$  contains
a starting point satisfying the conditions of Theorem~\ref{thm:one root}. In
particular, these orbits are $R$-central for a universally bounded
value of $R$. Note that the $d$ roots have to compete for the
available area in $D_{3R/2+1/2}(0)$ (the sets $V_n$ from Lemma~\ref{lemma: area bound} are contained in the immediate basins). Since the estimates in the proof of
Theorem \ref{thm:one root} are based on the area (except for the
Near Case where the orbit gets to the region of quadratic
convergence), we get the same estimate for the combined number of
iterations (except that the estimate $\log |\log \varepsilon|$
applies for each root separately, thus it is multiplied by $d$).
\end{proof}

\begin{remark}
\label{Rem:Optimal}
This result is close to optimal in the sense that the power of $d$
cannot be reduced for any set of starting points that is
bounded away from the unit disk. The reason is that outside the unit
disk $N_p$ is conjugate to the linear map $w \mapsto \frac{d-1}{d} w$
by  \cite[Lemma~4]{HSS}, so at least $O(d)$ iterations are required for
each ``good'' starting point to get close to the unit disk where the
roots are located, and at least $O(d^2)$ for all the $d$ starting
points combined.
\end{remark}

\noindent
\emph{Acknowledgement}. We would like to thank Victor Pan, Michael Stoll and
Stanislav Harizanov for interesting and encouraging discussions. We are also grateful to the anonymous referee and the editor for their valuable comments that led to improvements of our manuscript. Finally, we would also like to thank the Deutsche Forschungsgemeinschaft for their support.

\end{document}